\crefname{section}{Section}{Sections}
\crefname{subsection}{\S}{\S\S}
\theoremstyle{plain}
\newtheorem{lemma}{Lemma}[section]
\newtheorem{proposition}[lemma]{Proposition}
\newtheorem{corollary}[lemma]{Corollary}
\newtheorem{theorem}[lemma]{Theorem}
\newtheorem{lemma'}{Lemma}
\newtheorem{proposition'}[lemma']{Proposition}
\newtheorem{corollary'}[lemma']{Corollary}
\newtheorem{theorem'}[lemma']{Theorem}
\newtheorem{conjecture'}[lemma']{Conjecture}
\newtheorem{question'}[lemma']{Question}
\newtheorem{assumption'}[lemma']{Assumption}
\theoremstyle{nonumberplain}
\theoremstyle{plain}
\newtheorem{definition}[lemma]{Definition}
\newtheorem{notation}[lemma]{Notation}
\newtheorem{remark}[lemma]{Remark}
\newtheorem{definition'}[lemma']{Definition}
\newtheorem{notation'}[lemma']{Notation}
\newtheorem{example'}[lemma']{Example}
\newtheorem{remark'}[lemma']{Remark}
\newtheorem{convention'}[lemma']{Convention}
\crefname{definition}{definition}{definitions}
\crefname{definition'}{definition}{definitions}
\crefname{ex}{example}{examples}
\crefname{remark}{remark}{remarks}
\crefname{convention}{convention}{conventions}
\crefname{lemma}{lemma}{lemmas}
\crefname{proposition}{proposition}{propositions}
\crefname{proposition'}{proposition}{propositions}
\crefname{corollary}{corollary}{corollaries}
\crefname{theorem}{theorem}{theorems}
\crefname{theorem'}{theorem}{theorems}
\crefname{assumption}{assumption}{Assumptions}
\crefname{question}{question}{Questions}
\crefname{question'}{question}{Questions}
\crefname{equation}{}{}
\theoremstyle{nonumberplain}
\newtheorem{proof}{Proof}
\newtheorem{proof_of_symbis}{Proof of \Cref{th.sym_bis}}
\newtheorem{proof_of_ellbis}{Proof of \Cref{th.ell_bis}}
\newtheorem{proof_of_bla}{Proof of \Cref{pr.bla}}
\newtheorem{proof_of_aux_bis}{Proof of \Cref{pr.aux_bis}}
\newtheorem{proof_of_illum}{Proof of \Cref{th.illum}}
\newcommand\bR{{\mathbb R}}
\newcommand\bS{{\mathbb S}}
\newcommand\cG{{\mathcal G}}
\DeclareMathOperator{\co}{\mathrm{co}}
\newcommand{\define}[1]{{\em #1}}
\newcommand{\qedhere}{\mbox{}\hfill\ensuremath{\blacksquare}}
\title{Parkable convex sets and finite-dimensional Hilbert spaces}
\author{Alexandru Chirvasitu\footnote{University of Washington, \url{chirva@uw.edu}}}
\begin{document}

\date{}

\maketitle

\begin{abstract}
A subset of a convex body $B$ containing the origin in a Euclidean space is {\it parkable in $B$} if it can be translated inside $B$ in such a manner that the translate the origin. We provide characterizations of ellipsoids and of centrally symmetric convex bodies in Euclidean spaces of dimension $\ge 3$ based on the notion of parkability, answering several questions posed by G. Bergman. 

The techniques used, which are based on characterizations of Hilbert spaces among finite-dimensional Banach spaces in terms of their lattices of subspaces and algebras of endomorphisms, also apply to improve a result of W. Blaschke characterizing ellipsoids in terms of boundaries of illumination. 
\end{abstract}

\noindent {\em Key words: convex body, Blaschke, centrally symmetric, ellipsoid, parkable set, Hilbert space, Banach space}

\vspace{.5cm}

\noindent{MSC 2010: 52A20, 52A21, 47L10, 46C15}


\section*{Introduction}

The present paper answers several questions raised in \cite{berg} regarding convex bodies in euclidean spaces. The setup is based on the following notion; it is introduced by G. Bergman in the process of studying ``efficient'' embeddings of metric spaces into other metric spaces. 

All convex sets are understood to be subsets of some ambient Euclidean space $\bR^n$.

\begin{definition'}\label{def.parkable}
  Let $C\subseteq B$ be convex sets with $0\in B$. $C$ is \define{parkable in} $B$ if some translate of $C$ is still contained in $B$ and contains $0$. 
\end{definition'}

The questions in \cite{berg} referred to above have to do with characterizing particularly nice convex bodies in $\bR^n$ (i.e. centrally symmetric or ellipsoids) by means of parkability. First recall

\begin{definition'}
  Let $C\subseteq \bR^n$ be a convex subset. A \define{center of symmetry} for $C$ is a point $p\in C$ such that
  \begin{equation*}
    C=2p-C:=\{2p-x\ |\ x\in C\}. 
  \end{equation*}
  $C$ \define{has a center of symmetry} if a center of symmetry exists. 

  $C$ is \define{centrally symmetric} if $0\in C$ is a center of symmetry for it. 
\end{definition'}

\cite[Question 32]{berg} then reads as follows.

\begin{question'}\label{qu.sym}
  Let $C$ be a compact convex subset of $\bR^n$ for some $n>2$, with the property that for every centrally symmetric compact convex subset $B\subset \bR^n$ containing some translate of $C$, that translate is parkable in $B$. 

Is it true that $C$ must have a center of symmetry?  
\end{question'}

One of the main results is

\begin{theorem'}\label{th.sym}
  The answer to \Cref{qu.sym} is affirmative. 
\qedhere
\end{theorem'}

A more elaborate result has to do with recognizing ellipsoids by means of parkability. Recall

\begin{definition'}
  A \define{convex body} is a compact convex set with non-empty interior which contains $0$. 
\end{definition'}

The starting point for the discussion that follows is the following result from \cite{berg}.

\begin{proposition'}\label{pr.ell}
 Let $B\subset \bR^n$ be a convex body for some $n>2$. Then, the
 following properties are successively weaker. 
 \begin{enumerate}
 \renewcommand{\labelenumi}{(\roman{enumi})}  
   \item $B$ is an ellipsoid centered at $0$.  
   \item $B$ is centrally symmetric and its intersection with every hyperplane has a center of symmetry (provided it is not empty). 
   \item Every closed convex subset of $B$ is parkable in $B$. 
 \end{enumerate}
\end{proposition'}

It is then natural to ask \cite[Question 31]{berg}:

\begin{question'}\label{qu.ell}
  Is either of the implications from \Cref{pr.ell} reversible?
\end{question'}

In this context, the second main result is

\begin{theorem'}\label{th.ell}
  The three properties in \Cref{pr.ell} are equivalent to one another. 
  \qedhere
\end{theorem'}

The rest of this introduction contains a few remarks about the proofs.

\Cref{th.sym,th.ell} do not directly imply one another as such, but they are nevertheless interlinked through the methods used in their proofs. The same auxiliary lemma about plane compact convex sets, for instance, leads both to \Cref{th.sym} and to the fact that condition (iii) in \Cref{pr.ell} implies that $B$ is centrally symmetric (in other words, the partial implication (iii) $\Rightarrow$ (ii)).

Once we show that (iii) implies central symmetry, we can assume $B$ to be centrally symmetric throughout the rest of the proof of \Cref{th.ell}. As such, it is the unit ball of a unique Banach space structure $(\bR^n,\|\cdot\|)$ on $\bR^n$, and now functional-analytic techniques and results can be brought to bear.

More specifically, using the same auxiliary result referred to in passing above we first prove

\begin{proposition'}\label{pr.aux}
  Let $B$ be a convex centrally symmetric body $B$ satisfying condition (iii) from \Cref{pr.ell}. Then, for every linear hyperplane $H\subset \bR^n$, the non-empty intersections
  \begin{equation*}
    (H+x)\cap B,\quad x\in \bR^n
  \end{equation*}
have centers of symmetry. Moreover, these centers are collinear. \qedhere
\end{proposition'}

Associating to $L$ the unique line through the origin containing the centers of symmetry from the statement of \Cref{pr.aux} extends, it turns out, to an inclusion-reversing involution on the lattice of subspaces of the Banach space $(\bR^n,\|\cdot\|)$ referred to above. This, together with a lattice-theoretic characterization of Hilbert spaces among Banach spaces due to Kakutani et al. leads to the conclusion that $(\bR^n,\|\cdot\|)$ is a Hilbert space. But this is equivalent to its unit ball being an ellipsoid, and the conclusion follows.

There are other results that might be of independent interest, such as an improvement of a result of Blaschke on characterizations of ellipsoids by means of light rays (\cite{bla}).


\section{Preliminaries}\label{se.prel}

\subsection{Convex geometry}\label{subse.conv}

Our main reference on the topic will be \cite{tomo}, to which we refer the reader for basic terminology. 

We say that two convex bodies in a Euclidean space $\bR^n$ are {\it mutual translates} if one is the image of the other through some translation of $\bR^n$.

The following result will make an appearance several times in the
sequel; the reader can consult e.g. \cite{Rya15} and the references
therein for background on the result (which is also \cite[Theorem 3.1.3]{tomo}).

\begin{theorem}\label{th.transl}
 Let $K$ and $L$ be two compact convex subsets of $\bR^n$ for $n\ge 3$, and $2\le m\le {n-1}$ a positive integer. If the projections of $K$ and $L$ on every $m$-dimensional linear subspace of $\bR^n$ are mutual translates, then so are $K$ and $L$. 
\qedhere 
\end{theorem}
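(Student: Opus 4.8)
The plan is to translate the hypothesis into a statement about support functions and then reduce it to a Cauchy-type functional equation. Recall that for a compact convex set $K\subseteq\bR^n$ the support function $h_K(u)=\sup_{x\in K}\langle x,u\rangle$ is continuous and positively $1$-homogeneous, and that two compact convex sets $K,L$ are mutual translates precisely when $h_L-h_K$ is the restriction to $\bR^n$ of a linear functional $\langle a,\cdot\rangle$, with $a$ the translation vector. Moreover, writing $\pi_E$ for the orthogonal projection onto a linear subspace $E$, for $u\in E$ one has $h_{\pi_E(K)}(u)=\sup_{x\in K}\langle \pi_E(x),u\rangle=\sup_{x\in K}\langle x,u\rangle=h_K(u)$, so the support function of a projection is simply the restriction of $h_K$ to $E$.

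Set $f:=h_L-h_K$, a continuous, positively $1$-homogeneous function on $\bR^n$. By the two remarks above, the hypothesis that $\pi_E(K)$ and $\pi_E(L)$ are mutual translates for every $m$-dimensional linear subspace $E$ is equivalent to the assertion that $f|_E$ is the restriction of a linear functional (necessarily lying in $E$) for every such $E$. The goal is then to show that $f$ itself is linear on all of $\bR^n$, for in that case $h_L-h_K=\langle a,\cdot\rangle$ and $K$, $L$ are translates.

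First I would reduce to the case $m=2$. Since $2\le m\le n-1\le n$, every $2$-plane $P$ through the origin is contained in some $m$-dimensional subspace $E$; as $f|_E$ is linear, so is its further restriction $f|_P$, and hence $f$ is linear on every $2$-dimensional subspace. From this I extract additivity: given arbitrary $u,v\in\bR^n$, their span has dimension at most $2$ and therefore sits inside some $2$-plane $P$ on which $f$ is linear, giving $f(u+v)=f(u)+f(v)$ (and, taking $v=-u$, oddness $f(-u)=-f(u)$). Thus $f$ is additive; being additive and continuous, it is $\bR$-linear by the standard resolution of the Cauchy functional equation. This produces the vector $a$ with $f=\langle a,\cdot\rangle$ and the theorem follows.

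The only point requiring care---and the step I expect to be the main, if modest, obstacle---is the bookkeeping around the translation vectors $a_E\in E$ attached to the various subspaces: a priori these are defined only subspace-by-subspace, and one might worry about assembling them compatibly into a single global $a$. The functional-equation argument sidesteps this entirely, since it operates pointwise with $f$ and never requires the $a_E$ to be reconciled by hand; the global vector emerges automatically from the linearity of $f$. I would also note that no full-dimensionality of $K$ or $L$ is needed, the support-function formalism being valid for arbitrary compact convex sets.
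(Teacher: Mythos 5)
Your proof is correct. There is, however, nothing internal to compare it against: the paper does not prove \Cref{th.transl} at all, but quotes it as a known result, pointing to \cite{Rya15} and to \cite[Theorem 3.1.3]{tomo}. Your support-function argument is essentially the standard proof of that classical theorem, and all the key steps check out: the identity $h_{\pi_E(K)}=h_K|_E$ for $u\in E$ (using that $\pi_E$ is self-adjoint), the characterization of mutual translates by linearity of $f=h_L-h_K$, the reduction to $2$-planes (which is exactly where the hypothesis $m\ge 2$ is used --- for $m=1$ the statement is false), additivity of $f$ from linearity on every $2$-plane, and the passage from additivity to linearity. One small simplification: continuity is not actually needed in the last step, since $f$ is a difference of positively $1$-homogeneous functions and additivity already gives $f(-u)=-f(u)$; positive homogeneity plus oddness yields $f(\lambda u)=\lambda f(u)$ for all real $\lambda$, so additivity alone then forces linearity, with no appeal to the continuous Cauchy equation. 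Your closing observation that the subspace-wise translation vectors $a_E$ never need to be patched together by hand is precisely the reason this proof is as clean as it is.
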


As a consequence, we get (\cite[Corollary 3.1.5]{tomo})

\begin{corollary}\label{cor.centr}
  Let $m$ and $n$ be positive integers as in \Cref{th.transl}, and
  $B\subset \bR^n$ a compact convex subset. Then, $B$ has a center of symmetry
  if and only if its projection on every $m$-dimensional linear
  subspace of $\bR^n$ does. 
\end{corollary}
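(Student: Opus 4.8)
The plan is to recast the property ``has a center of symmetry'' as a statement about mutual translates, so that \Cref{th.transl} applies verbatim. The key elementary observation is that a compact convex set $C$ has a center of symmetry precisely when $C$ and its reflection $-C$ are mutual translates: if $p$ is a center, then $C=2p-C=2p+(-C)$ exhibits $C$ as a translate of $-C$; conversely, if $C=-C+v$ for some $v$, then $v/2$ is a center of symmetry. I would record this as a one-line remark at the outset.

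Next I would note that orthogonal projection is linear and hence commutes with reflection through the origin: writing $P_U$ for the projection onto an $m$-dimensional linear subspace $U\subset\bR^n$, one has $P_U(-B)=-P_U(B)$. Combined with the observation above (applied inside $U$), this shows that $P_U(B)$ has a center of symmetry if and only if $P_U(B)$ and $P_U(-B)$ are mutual translates.

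The forward implication is then immediate: if $p$ is a center of symmetry for $B$, applying $P_U$ to $B=2p-B$ gives $P_U(B)=2P_U(p)-P_U(B)$, so $P_U(p)$ is a center for the projection. For the reverse implication I set $K=B$ and $L=-B$. By hypothesis every $m$-dimensional projection $P_U(B)$ has a center of symmetry, so by the equivalence above $P_U(K)=P_U(B)$ and $P_U(L)=P_U(-B)$ are mutual translates for every such $U$. \Cref{th.transl} then yields that $K=B$ and $L=-B$ are themselves mutual translates, and by the opening remark $B$ has a center of symmetry.

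Since each individual step is routine, I do not anticipate a serious obstacle; the only point requiring care is the bookkeeping in the opening equivalence, namely checking that the center-of-symmetry condition for a projection is exactly the mutual-translate condition needed to invoke \Cref{th.transl}.
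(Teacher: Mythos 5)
Your proof is correct and is essentially the same as the paper's: both rest on the observation that having a center of symmetry is equivalent to $B$ and $-B$ being mutual translates, after which \Cref{th.transl} applies to the pair $(B,-B)$. You merely spell out the routine details (the $v/2$ bookkeeping and the fact that projections commute with $x\mapsto -x$) that the paper leaves implicit.
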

\begin{proof}
  Having a center of symmetry is equivalent to $B$ and $-B$ being
  translates; we can now simply apply \Cref{th.transl} to this pair of
  convex sets. 
\end{proof}

\subsection{Banach and Hilbert spaces}\label{subse.ban}

We will assume some basics on Banach and Hilbert spaces and bounded operators thereon, as covered e.g. in the introductory sections of \cite[Chapters 1 and 12]{rud} or in the first three chapters of \cite{con}.

Given a centrally symmetric convex body $B=-B$ in $\bR^n$, we can associate to it the unique Banach space structure on $\bR^n$ making $B$ the unit ball: the norm of $x\in \bR^n$ is defined to be
\begin{equation*}
  \|x\| = \|x\|_B = \inf\{r\ge 0\ |\ x\in rB\}. 
\end{equation*}
The correspondence $B\mapsto \|\cdot\|_B$ is a bijection between centrally symmetric convex bodies and Banach space structures on $\bR^n$; the inverse map associates to a Banach space structure $(\bR^n,\|\cdot\|)$ its unit ball.

As \Cref{th.ell} above suggests, one of our main goals will be characterizing ellipsoids among convex bodies. In terms of the bijection $B\leftrightarrow \|\cdot\|_B$ the ellipsoids correspond to the {\it Hilbert space} structures on $\bR^n$, i.e. those Banach space structures whose underlying norm $\|\cdot\|$ arises from an inner product $\langle-,-\rangle$ via the usual formula
\begin{equation*}
  \|x\|^2 = \langle x,x\rangle,\ \forall x\in \bR^n. 
\end{equation*}
For this reason, it will be important to have at our disposal results that allow for the recognition of Hilbert spaces among Banach spaces. One such tool is \cite[Theorem 1.1]{istr} (or rather a variant thereof, with real Banach spaces instead of complex ones):

\begin{theorem}\label{th.istr} 
  Let $(\bR^n,\|\cdot\|)$ be a Banach space. The norm is induced by an inner product if and only if there exists an operation $T\mapsto T^*$ on the space $M_n(\bR)$ of endomorphisms of $\bR^n$ such that
  \begin{enumerate}
    \item $(T+S)^* = T^*+S^*$; 
    \item $(T^*)^* = T$;
    \item $(TS)^* = S^*T^*$;
    \item $\|P\|\le 1$ if $P^2=P=P^*$, where $\|P\|$ is the norm on $M_n(\bR)$ induced by that on $\bR^n$. 
  \end{enumerate}
\qedhere
\end{theorem}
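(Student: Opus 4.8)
The forward implication is routine, so I would dispose of it quickly: if $\|\cdot\|$ comes from an inner product $\langle-,-\rangle$, then the usual Hilbert-space adjoint, defined by $\langle Tx,y\rangle=\langle x,T^*y\rangle$, is linear, involutive and anti-multiplicative, so it satisfies (1)--(3), and a self-adjoint idempotent $P$ is an orthogonal projection, whence $\|P\|\le 1$, giving (4). All the effort goes into the converse.

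For the converse, suppose $T\mapsto T^*$ satisfies (1)--(4). Conditions (1)--(3) say precisely that $*$ is an involutory anti-automorphism of the algebra $M_n(\bR)$, and the first step is to normalize it. Composing $*$ with the transpose $T\mapsto T^t$ yields an algebra automorphism of $M_n(\bR)$, which by the Skolem--Noether theorem is inner; hence there is an invertible $S\in M_n(\bR)$ with
\[
  T^* = S\,T^t\,S^{-1}\quad\text{for all }T,
\]
and feeding this back into (2) forces $S(S^t)^{-1}$ to be central, i.e. $S^t=\pm S$, so that $S$ is symmetric or antisymmetric. I would note that for odd $n$ the antisymmetric (symplectic) alternative is impossible outright, an invertible antisymmetric matrix forcing even size.

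The second step uses (4) to eliminate all but the positive-definite symmetric case. In the symmetric case the self-adjoint idempotents $P=P^2=P^*$ are exactly the projections whose range and kernel are orthogonal for the bilinear form $\langle x,y\rangle_S:=x^tS^{-1}y$; if this form were indefinite I would exhibit, along an $S$-isotropic direction, a ``hyperbolic'' oblique self-adjoint idempotent of norm exceeding $1$, contradicting (4), and after replacing $S$ by $-S$ if necessary I may take it positive definite. The antisymmetric case (for even $n$) must be excluded by (4) as well, and this is where I expect the main obstacle to lie: one has to show that the family of symplectic self-adjoint idempotents (projections onto symplectic subspaces along their symplectic complements) is too rich to be simultaneously norm-decreasing for any norm when $n\ge 3$.

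Once $S$ is symmetric and positive definite, $g(x,y):=x^tS^{-1}y$ is a genuine inner product, $*$ is its adjoint operation, and the self-adjoint idempotents are precisely the $g$-orthogonal projections. The decisive final step is then to identify the given norm with the one induced by $g$: by (4) every $g$-orthogonal projection is a $\|\cdot\|$-contraction, hence (being a nonzero projection) a norm-one projection, so every subspace of $\bR^n$ is the range of a norm-one projection; by Kakutani's characterization of inner-product spaces through norm-one projections (valid since $n\ge 3$) the norm $\|\cdot\|$ is itself induced by some inner product $g'$. For a Hilbertian norm a projection is a contraction exactly when it is $g'$-orthogonal, so every $g$-orthogonal projection is $g'$-orthogonal; comparing orthogonal complements of lines shows $g$ and $g'$ induce the same orthogonality relation and are therefore proportional, whence $\|\cdot\|=\sqrt{g'}$ is induced by an inner product. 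I regard the bridge that (4) provides between the algebraic operation $*$ and the metric --- the exclusion of the symplectic case together with the contractive-projection step --- as the crux; alternatively the complex statement \cite{istr} could be invoked here, at the cost of a separate complexification of both the norm and the operation $*$.
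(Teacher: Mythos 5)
First, note that the paper itself offers no proof of this theorem: it is imported (in real form) from \cite[Theorem 1.1]{istr}, so there is no in-paper argument to compare yours against, and your reconstruction must be judged on its own terms. Your skeleton is the right one and, for $n\ge 3$, it works: conditions (1)--(3) make $*$ an involutive anti-automorphism, Skolem--Noether normalizes it to $T^*=S\,T^t\,S^{-1}$ with $S^t=\pm S$, condition (4) kills the indefinite symmetric case via rank-one self-adjoint idempotents whose norms blow up along an isotropic direction, and once $S$ is positive definite the contractivity of all $g$-orthogonal projections plus Kakutani's theorem finishes. Two touch-ups: condition (1) gives only additivity, so before invoking Skolem--Noether you should record that an additive, anti-multiplicative involution of $M_n(\bR)$ is automatically $\bR$-linear (it restricts to an involutive ring endomorphism of the center $\bR I$, and the identity is the only ring endomorphism of $\bR$); and your closing comparison of $g$ with $g'$ is superfluous, since Kakutani's conclusion --- that $\|\cdot\|$ itself is induced by some inner product --- is already the assertion being proved.

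The one genuine gap is the one you flag yourself: excluding the antisymmetric case. For even $n\ge 4$ it can be closed cheaply, because the symplectic self-adjoint idempotents form an unbounded set of matrices: choose a symplectic basis $e_1,f_1,e_2,f_2,\dots$ for the form $B(x,y)=x^tS^{-1}y$, so that $B(e_i,f_i)=1$ and all other pairings vanish, and for $s\in\bR$ let $P_s$ be the projection onto the $B$-nondegenerate plane $W_s=\mathrm{span}(e_1,\,f_1+se_2)$ along $W_s^{\perp_B}$. Each $P_s$ is a self-adjoint idempotent, and since $e_2\in W_s^{\perp_B}$ one gets $P_sf_1=f_1+se_2$, whence $\|P_s\|\to\infty$ as $s\to\infty$ for \emph{any} norm, contradicting (4). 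But for $n=2$ this gap cannot be closed, because the theorem as stated is false there: taking $T^*=JT^tJ^{-1}=\mathrm{adj}(T)$ (the symplectic involution on $M_2(\bR)$), conditions (1)--(3) hold, and the only self-adjoint idempotents are $0$ and $I$, so (4) holds vacuously for every norm on $\bR^2$, Euclidean or not. Thus the statement genuinely requires $n\ge 3$; your proof uses that hypothesis twice (in the symplectic exclusion and in Kakutani), and this is harmless for the paper, which applies the theorem only with $n=3$, where the antisymmetric case is already void by your parity remark.
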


In other words, we have a recognition criterion for Hilbert space norms in terms of involutions $T\mapsto T^*$ on their algebras of endomorphisms.

\section{Universally parkable sets}\label{se.univ}

The aim of this section is to prove \Cref{th.sym} above, answering \Cref{qu.sym} in the affirmative. We recall the statement, after introducing the following notion relevant to the setup of the theorem.

\begin{definition}\label{def.univ_park}
  A compact convex set $C\subset \bR^n$ is {\it universally parkable} if for any centrally symmetric convex subset $B\subset \bR^n$ containing a translate of $C$, that translate is parkable in $B$ in the sense of \Cref{def.parkable}. 
\end{definition}

\begin{theorem}\label{th.sym_bis}
  For a positive integer $n\ge 3$, every universally parkable compact
  convex subset $C\subset \bR^n$ has a center of symmetry.
\end{theorem}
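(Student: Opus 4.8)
The plan is to reduce to a planar statement via projections and then to exploit the extra dimension through a carefully pinched centrally symmetric body. By \Cref{cor.centr} (applied with $m=2$, which is legitimate since $n\ge 3$), $C$ has a center of symmetry as soon as its orthogonal projection $P_V(C)$ onto every $2$-dimensional subspace $V$ does. So I would fix a $2$-plane $V$, write $K:=P_V(C)$, and argue by contraposition: assuming $K$ has no center, produce a centrally symmetric $B\subset\bR^n$ that contains a translate of $C$ but no translate of $C$ through the origin, contradicting universal parkability. Passing to a three-dimensional coordinate subspace $W\supseteq V$ and crossing with a large symmetric ball in $W^\perp$ reduces matters to the case $n=3$: it suffices to obstruct parking for the three-dimensional body $P_W(C)$, whose projection to $V$ is still $K$.

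The planar input --- the ``auxiliary lemma about plane compact convex sets'' --- is the following fact, which I would establish first: for plane compact convex $K$, the difference body $\frac12(K-K)$ contains a translate of $K$ if and only if $K$ has a center of symmetry. In support-function terms this is transparent: all translates of $K$ share the even part $e(u)=h_{\frac12(K-K)}(u)$ of $h_K$ and differ only in the odd part $o(u)=\frac12\big(h_K(u)-h_K(-u)\big)$, and a translate lands inside $\frac12(K-K)$ exactly when $o+\langle v,\cdot\rangle\le 0$, which by oddness forces $o$ to be linear, i.e. $K$ to be centered. A short stability argument then upgrades this: writing $E_\rho$ for the symmetric body with $h_{E_\rho}=e+\rho\,|o|$, one has $E_0=\frac12(K-K)$ and $E_1=\mathrm{conv}(K\cup-K)\supseteq K$, so when $K$ is not centered there is a threshold $\rho^*>0$ for which $E_\rho$ contains no translate of $K$ whenever $\rho<\rho^*$.

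With this in hand I would build, in $W\cong\bR^3=V\oplus\bR e_3$, the symmetric body $B=\mathrm{conv}\big((K\times[h_1,h_2])\cup(-K\times[-h_2,-h_1])\big)$ with $0<h_1<h_2$. Its horizontal slice at height $z=0$ is exactly $E_\rho$ with $\rho=\frac{h_2-h_1}{h_1+h_2}$, while the chunk $K\times[h_1,h_2]$ comfortably carries a translate of $C$ away from the origin, so $B$ does contain a translate of $C$. Any origin-covering translate, on the other hand, must present in the $z=0$ slice a translate of the corresponding cross-section of $C$; choosing $\rho<\rho^*$ makes $E_\rho$ too small to receive a translate of $K$, which is the desired contradiction in the case where that cross-section is all of $K$. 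The main obstacle is precisely this last point for a general convex body: unlike a prism (constant cross-section $K$, where the contradiction is immediate), a general $C$ meets the waist in a variable and possibly small cross-section, so the pinched body must be tailored --- varying the waist profile with height, or stacking several waists --- so that every translate of $C$ through the origin is forced to slide a near-full copy of $K$ through a region modeled on $\frac12(K-K)$. Making $\rho^*$ quantitative and carrying out this tailoring is where the real work lies; it is also where $n\ge 3$ is essential, since the third coordinate is what carries the taper, and the naive planar analogue fails (plane convex sets tend to be parkable into every symmetric superset irrespective of symmetry).
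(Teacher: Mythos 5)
You have set up the reduction correctly (\Cref{cor.centr} with $m=2$, then the product-with-a-ball passage down to $n=3$), and your planar difference-body lemma is right: the support-function argument showing that $\tfrac12(K-K)$ contains a translate of $K$ only if $K$ is centrally symmetric is exactly correct, as is the compactness argument producing the threshold $\rho^*$. But the proposal has a genuine gap precisely where you say it does, and that gap is the heart of the theorem: your obstruction argument works only when the origin-containing translate of $C$ presents a \emph{full} translate of $K$ in the waist slice of $B$, i.e.\ essentially only when $C$ is a prism over $K$. For a general convex body the slice of $C+v$ through $z=0$ can be a tiny subset of $K$ carrying no trace of the asymmetry of $K$, so no contradiction with the smallness of $E_\rho$ arises; gesturing at ``tailoring the waist profile'' or ``stacking waists'' is not an argument, and as written you have proved the theorem only for cylinders $K\times[a,b]$. (For comparison, the paper proceeds entirely differently: it shows the parking direction of far-away translates $C_u$ inside $\co(-C_u\cup C_u)$ is unique, continuous and odd (\Cref{le.unique,le.cont}), uses Borsuk--Ulam to make the resulting direction map onto (\Cref{cor.onto}), and then shows the projection of $C$ orthogonal to each such direction is centrally symmetric (\Cref{le.tough}).)

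The gap can in fact be closed with your construction unchanged, by arguing with the projection of the \emph{whole} translate rather than with a single slice; no tailoring is needed. Take $\delta:=h_2-h_1$ to be at least the height of $C_3:=P_W(C)$ in the $e_3$-direction (this is needed anyway for $B$ to contain a translate of $C_3$) and take $\rho=\delta/(h_1+h_2)\le 1/3$, which forces $\delta\le h_1$. Writing $H_\lambda:=\lambda K-(1-\lambda)K$, whose support function is $e+(2\lambda-1)o$, one checks that the slice of $B$ at height $z$ equals $\mathrm{conv}\bigl(H_{\lambda_0(z)}\cup H_{\lambda_1(z)}\bigr)$ with $\lambda_i(z)=(z+h_i)/(h_1+h_2)$, and that $|2\lambda_i(z)-1|\le 3\rho$ whenever $|z|\le\delta$; consequently every such slice is contained in your body $E_{3\rho}$. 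Now if $C_3+v\subseteq B$ contains the origin, then \emph{all} of $C_3+v$ lies at heights $|z|\le\delta$, because its vertical extent is at most $\delta$ and it meets the plane $z=0$; hence every point of $C_3+v$ projects into $E_{3\rho}$, so that $K+P_V(v)=P_V(C_3+v)\subseteq E_{3\rho}$. Choosing $\rho<\rho^*/3$ contradicts your threshold lemma, and the theorem follows. With this one substitution your argument becomes a complete proof, and a notably more elementary one than the paper's, since it avoids the Borsuk--Ulam input altogether.
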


We proceed through a series of auxiliary results. First off, we narrow down the class of centrally symmetric convex sets that witness the parkability of $C\subset \bR^n$. For this purpose as well as for use in the sequel, denote by $C_u$ the translate $C+u$ for $u\in \bR^n$.

\begin{lemma}\label{le.Cu}
  A compact convex subset $C\subset \bR^n$ is universally parkable if and only if for every $u\in \bR^n$ the translate $C_u$ is parkable in the convex hull $\co(-C_u\bigcup C_u)$.
\end{lemma}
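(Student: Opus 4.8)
The plan is to reduce both implications to two elementary observations. The first is that, for each $u\in\bR^n$, the set $B_u:=\co(-C_u\cup C_u)$ is the \emph{smallest} centrally symmetric convex set containing the translate $C_u$. The second is that parkability is monotone in its ambient set: if $C_u$ parks in some convex set $B'$ with $0\in B'$, and $B'\subseteq B''$ with $0\in B''$, then $C_u$ parks in $B''$ as well, since the translate $C_u+v\subseteq B'$ witnessing parkability in $B'$ (with $0\in C_u+v$) still lies inside $B''$. Once these two facts are in place, both directions of the equivalence fall out immediately.

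For the minimality claim I would first check that $B_u$ is itself a legitimate ambient set in the sense of \Cref{def.parkable}: negation interchanges $C_u$ and $-C_u$ and hence fixes their convex hull, so $B_u=-B_u$ is centrally symmetric; and $0\in B_u$ because it is the midpoint of $c$ and $-c$ for any $c\in C_u$; finally $C_u\subseteq B_u$ is clear. Conversely, if $B$ is any centrally symmetric convex set with $C_u\subseteq B$, then $B=-B$ forces $-C_u\subseteq B$ as well, and convexity gives $B_u=\co(-C_u\cup C_u)\subseteq B$. Thus $B_u$ is contained in every centrally symmetric convex set that contains $C_u$.

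With these in hand the two directions are routine. For the forward implication, assume $C$ is universally parkable. Each $B_u$ is centrally symmetric and contains the translate $C_u$, so applying the definition of universal parkability with $B=B_u$ yields that $C_u$ is parkable in $B_u$, which is exactly the asserted condition. For the reverse implication, assume $C_u$ is parkable in $B_u$ for every $u$, and let $B$ be an arbitrary centrally symmetric convex set containing some translate $C_u$ of $C$. By minimality $B_u\subseteq B$, and since $B$ is centrally symmetric we have $0\in B$; monotonicity of parkability then promotes parkability of $C_u$ in $B_u$ to parkability in $B$. As $B$ and the translate it contains were arbitrary, $C$ is universally parkable.

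There is no genuine obstacle here: the lemma is a bookkeeping statement whose entire content lies in recognizing $\co(-C_u\cup C_u)$ as the minimal centrally symmetric envelope of $C_u$, together with the trivial remark that enlarging the ambient body can only preserve parkability. The one point deserving care is verifying that $B_u$ satisfies all the hypotheses demanded of an ambient set in \Cref{def.parkable}, which the second paragraph records.
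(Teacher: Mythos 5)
Your proof is correct and follows essentially the same route as the paper's: the forward direction applies the definition of universal parkability to the centrally symmetric set $\co(-C_u\cup C_u)$, and the reverse direction uses that any centrally symmetric convex set containing $C_u$ must contain $\co(-C_u\cup C_u)$, so a parking translate inside the latter works in the former. Your write-up merely makes explicit the two bookkeeping facts (minimality of the symmetric hull and monotonicity of parkability in the ambient set) that the paper leaves implicit.
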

\begin{proof}
  On the one hand, if $C$ is universally parkable, every $\co(-C_u\bigcup C_u)$ is certainly a centrally symmetric convex subset of $\bR^n$ containing a translate $C_u$, and hence the latter must be parkable in the former. 

Conversely, every centrally symmetric compact convex $B\subset \bR^n$ containing a translate $C_u$ contains $\co(-C_u\bigcup C_u)$, so that if $C_u$ is parkable in the latter then it is parkable in the former as well. 
\end{proof}

As a consequence, we obtain

\begin{corollary}\label{cor.Cu}
  If $C\subset \bR^n$ is universally parkable, then so are its projections on linear subspaces of $\bR^n$. 
\end{corollary}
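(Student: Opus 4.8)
The plan is to reduce everything to the characterization of universal parkability furnished by \Cref{le.Cu} and then exploit the fact that a linear projection intertwines the three operations appearing there: translation, negation, and convex-hull formation. Fix a linear subspace $V\subseteq\bR^n$ and let $\pi:\bR^n\to V$ denote the orthogonal projection onto it. Since $\pi$ is linear and continuous, $\pi(C)$ is a compact convex subset of $V$, and for all $X\subseteq\bR^n$ and $u\in\bR^n$ we have $\pi(X+u)=\pi(X)+\pi(u)$, $\pi(-X)=-\pi(X)$, and $\pi(\co(X\bigcup Y))=\co(\pi(X)\bigcup\pi(Y))$; the last identity holds because $\pi$ is linear and hence carries convex hulls to convex hulls. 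In particular, writing $C_u=C+u$ as in the text, we obtain $\pi(C_u)=\pi(C)+\pi(u)$ and $\pi(\co(-C_u\bigcup C_u))=\co(-\pi(C_u)\bigcup\pi(C_u))$.

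Now regard $V$ as a Euclidean space in its own right, with origin $0$. By \Cref{le.Cu} (applied inside $V$) it suffices to prove that for every $v\in V$ the translate $\pi(C)+v$ is parkable in $\co\bigl(-(\pi(C)+v)\bigcup(\pi(C)+v)\bigr)$. I would choose any $u\in\bR^n$ with $\pi(u)=v$ (for instance $u=v$, using $V\subseteq\bR^n$), so that $\pi(C_u)=\pi(C)+v$. Since $C$ is universally parkable, \Cref{le.Cu} provides a vector $w$ for which the translate $C_{u+w}=C_u+w$ is contained in $\co(-C_u\bigcup C_u)$ and contains $0$.

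Applying $\pi$ to this configuration and invoking the intertwining identities above, $\pi(C_{u+w})=\pi(C_u)+\pi(w)$ is a translate of $\pi(C)+v$ that is contained in $\co(-\pi(C_u)\bigcup\pi(C_u))=\co\bigl(-(\pi(C)+v)\bigcup(\pi(C)+v)\bigr)$; moreover it contains $\pi(0)=0$ because $0\in C_{u+w}$. This is precisely the parkability assertion required, so \Cref{le.Cu} shows that $\pi(C)$ is universally parkable.

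I do not expect a genuine obstacle here: the entire content is the observation that the operations used to characterize universal parkability in \Cref{le.Cu} are preserved by a linear map, together with the fact that a parking translate projects to a parking translate. The only point demanding minor care is that parkability must be interpreted intrinsically within the target subspace $V$, which is legitimate since $V$ is a linear subspace and therefore contains the origin.
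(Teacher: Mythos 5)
Your proof is correct and takes essentially the same route as the paper: the paper's own proof simply asserts that the characterization in \Cref{le.Cu} is ``clearly invariant under taking orthogonal projections,'' and your argument is precisely the detailed verification of that claim (projection commutes with translation, negation, and convex hulls, so a parking translate projects to a parking translate). No gap; you have just made explicit what the paper leaves to the reader.
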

\begin{proof}
  This follows from the fact that the property from the statement of \Cref{le.Cu} is clearly invariant under taking orthogonal projections. 
\end{proof}

In order to fix ideas, we now specialize \Cref{th.sym_bis} to $n=3$.

\begin{lemma}
  If \Cref{th.sym_bis} holds for $n=3$ then it holds in general. 
\end{lemma}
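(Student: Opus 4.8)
The plan is to reduce the general case to the three-dimensional case by using the dimension-reduction tools assembled in \Cref{subse.conv}, namely \Cref{th.transl} and its consequence \Cref{cor.centr}. The statement to prove is that if \Cref{th.sym_bis} holds for $n=3$, then every universally parkable $C\subset\bR^n$ (for arbitrary $n\ge 3$) has a center of symmetry. The natural strategy is to detect the center of symmetry of $C$ through its lower-dimensional projections, where we already know the result.

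First I would invoke \Cref{cor.centr} with $m=3$: a compact convex set $B\subset\bR^n$ (here $B=C$) has a center of symmetry if and only if its projection onto every $3$-dimensional linear subspace does. This is exactly the right reduction statement, provided $n\ge 4$ so that $3\le n-1$; the case $n=3$ is the hypothesis itself. So it suffices to show that for each $3$-dimensional linear subspace $V\subset\bR^n$, the orthogonal projection $\pi_V(C)$ has a center of symmetry.

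Next I would use \Cref{cor.Cu}, which asserts that universal parkability is inherited by projections onto linear subspaces: since $C$ is universally parkable in $\bR^n$, its projection $\pi_V(C)$ is universally parkable inside the three-dimensional space $V\cong\bR^3$. At this point the assumed three-dimensional case of \Cref{th.sym_bis} applies directly to $\pi_V(C)$ and yields that $\pi_V(C)$ has a center of symmetry. Since $V$ was an arbitrary $3$-dimensional linear subspace, \Cref{cor.centr} then furnishes a center of symmetry for $C$ itself, completing the reduction.

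The only genuine subtlety — and the step I would be most careful about — is the interface between \Cref{cor.Cu} and \Cref{cor.centr}. \Cref{cor.centr} is phrased in terms of projections onto linear subspaces and reads off central symmetry of the ambient set from those of its shadows, while universal parkability is an intrinsic property of a convex set inside its own ambient space; one must check that \emph{linear} subspaces (rather than arbitrary affine ones) suffice and that a three-dimensional subspace $V$ may legitimately be regarded as a copy of $\bR^3$ so that the $n=3$ hypothesis is applicable verbatim. Both points are routine once one observes that universal parkability and the existence of a center of symmetry are translation-invariant properties, so working with linear rather than affine subspaces costs nothing. I expect no serious obstacle beyond bookkeeping here, as the heavy lifting has been delegated to \Cref{th.transl,cor.centr,cor.Cu} and to the (assumed) three-dimensional instance of the theorem.
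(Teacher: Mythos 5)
Your proof is correct and follows essentially the same route as the paper: project onto $3$-dimensional linear subspaces, use \Cref{cor.Cu} to transfer universal parkability to the projections, apply the assumed $n=3$ case, and conclude with \Cref{cor.centr}. Your extra remark that \Cref{cor.centr} requires $m\le n-1$ (so $n\ge 4$, with $n=3$ being the hypothesis itself) is a detail the paper leaves implicit, but it changes nothing of substance.
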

\begin{proof}
  Starting with a universally parkable $C\subset \bR^n$ for some $n\ge 3$, \Cref{cor.Cu} ensures that its projections on all $3$-dimensional linear subspaces of $\bR^n$ are universally parkable. If \Cref{th.sym_bis} holds for $\bR^3$ we can conclude that these projections all have centers of symmetry, which according to \Cref{cor.centr} means that so does $C$. 
\end{proof}

This now allows us to focus on the case $n=3$. Let $\bS\subset \bR^3$ be an origin-centered sphere whose radius
is large enough to ensure that no translates $C_u$ (and hence no reflections
$-C_u$) contain $0$ as $u$ ranges over $\bS$. The following lemma shows
that when $u\in \bS$ there is a certain rigidity in how one may
translate $C_u$ inside $\co(-C_u\bigcup C_u)$ so as to engulf $0$.

\begin{lemma}\label{le.unique}
  Let $C\subset \bR^3$ be as in the statement of \Cref{th.sym_bis} For
  any $u\in \bS$, the direction of a vector $0\ne v\in \bR^3$ such that 
  \begin{equation*}
    0\in C_u+v\subset \co(-C_u\bigcup C_u)
  \end{equation*}
 is uniquely determined. 
\end{lemma}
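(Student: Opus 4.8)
The plan is to translate the geometric condition into the language of support functions and then extract a linear-algebraic rigidity from it. Write $h_K(\xi)=\sup_{x\in K}\langle x,\xi\rangle$ for the support function of a compact convex $K$, put $w=-v$, and abbreviate $g=h_{C_u}$ and $D=\co(-C_u\cup C_u)$. The requirement $0\in C_u+v$ reads $w\in C_u$, while $C_u+v=C_u-w\subseteq D$ translates, via $h_D(\xi)=\max(g(\xi),g(-\xi))$, into $g(\xi)-\langle w,\xi\rangle\le\max(g(\xi),g(-\xi))$ for every $\xi$. Quantifying over both $\xi$ and $-\xi$, I would rewrite this as the clean two-sided statement that $\langle w,\xi\rangle$ lies between $0$ and
\[
d(\xi):=g(\xi)-g(-\xi)=h_{C_u}(\xi)-h_{C_u}(-\xi)
\]
for every unit vector $\xi$. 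In particular, on the zero locus $Z=\{\xi\in S^2:\ d(\xi)=0\}$ every admissible $w$ must satisfy $\langle w,\xi\rangle=0$, so that $w\perp\operatorname{span}(Z)$.

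The heart of the matter is then to show that $\operatorname{span}(Z)$ is at least two-dimensional, for then every admissible $w$ is confined to $\operatorname{span}(Z)^\perp$, a subspace of dimension at most one; since $0\notin C_u$, the admissible $w$ all lie on a single ray through the origin, and the direction of $v=-w$ is forced. This is where the largeness of $\bS$ enters. Because $C_u=C+u$, one has $d(\xi)=2\langle u,\xi\rangle+e(\xi)$ with $e(\xi)=h_C(\xi)-h_C(-\xi)$ bounded independently of $u$, say by $M$; hence for $|u|>M/2$ the continuous odd function $d$ is strictly positive at $\hat u=u/|u|$ and strictly negative at $-\hat u$, and its zero set $Z$ is trapped in the thin equatorial band $|\langle\hat u,\xi\rangle|\le M/(2|u|)$. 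Along every great semicircle joining $\hat u$ to $-\hat u$ the intermediate value theorem produces a point of $Z$ strictly off the poles; as the semicircle rotates these points cannot all be proportional, so $\operatorname{span}(Z)\ge 2$.

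Assembling the pieces: $\operatorname{span}(Z)$ has dimension $\ge 2$, its orthogonal complement is a line $L$ through the origin, every admissible $w$ lies in $L$, and $L\cap C_u$ is a segment missing the origin and hence contained in one ray. Thus all admissible $w=-v$ share a single direction, which is exactly the assertion of the lemma. As a reality check, for a round ball $C$ with center $a$ one computes $d(\xi)=2\langle u+a,\xi\rangle$, whence $Z=(u+a)^\perp\cap S^2$ and $L=\bR(u+a)$, recovering the expected sliding direction $-\widehat{u+a}$; for centrally symmetric $C$ one gets $e\equiv 0$, $Z=u^\perp\cap S^2$ and sliding direction $-\hat u$.

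The step I expect to be most delicate is the lower bound $\operatorname{span}(Z)\ge 2$. The support function $h_C$ need not be smooth or strictly convex, so $d$ is merely continuous and odd, and I must argue purely topologically—via the sign change of $d$ between the two poles along each rotated semicircle—that $Z$ cannot be squeezed onto a single line through the origin. Everything else (the support-function dictionary $A\subseteq B\iff h_A\le h_B$, the reduction $w=-v$, the passage to the two-sided form, and the concluding ray argument) is routine once this rigidity of the zero locus $Z$ is in hand.
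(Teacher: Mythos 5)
Your proof is correct, but it takes a genuinely different route from the paper's. The paper argues by projection: for any plane $H$ such that $C_u|H$ misses the origin and is not collinear with it (a non-empty open subset of $\cG(3,2)$), the boundary of $\co(-C_u|H\cup C_u|H)$ contains two parallel open segments forming opposite edges of a parallelogram centered at $0$, which forces the projection of $v$ onto $H$ to be parallel to these segments and to point away from $C_u|H$; since this pins down the projection of $v$ on an open set of planes, the direction of $v$ itself is determined. You instead encode everything analytically in the support function. Your two-sided reformulation of $C_u-w\subseteq\co(-C_u\cup C_u)$ as ``$\langle w,\xi\rangle$ lies between $0$ and $d(\xi)$'' is exactly right (split on the sign of $d(\xi)$ and use the inequality at both $\xi$ and $-\xi$), and your delicate step is sound: two zeros of $d$ lying on distinct great circles through $\pm\hat u$, neither a pole, can be neither equal nor antipodal (either would put them in the intersection of the two great circles, which is $\{\pm\hat u\}$), hence they are linearly independent and $\operatorname{span}(Z)$ is at least a plane. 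Moreover, the paper's choice of $\bS$ already supplies your largeness hypothesis: since $C\subset\{|x|<R\}$, one has $M=\max_\xi|e(\xi)|<2R$, so $R>M/2$ automatically. Comparing what each approach buys: the paper's planar picture makes immediately visible the stronger fact recorded in \Cref{re.unique} (any nonzero translation keeping $C_u$ inside the hull points in the direction $\varphi(u)$), which is what the continuity argument in \Cref{le.cont} actually invokes; your inequality, which uses $w\in C_u$ only in the final ray-selection step, yields that stronger statement too with one extra line (two admissible vectors of opposite directions would force $\langle w,\xi\rangle=0$ for all $\xi$, whence $w=0$), while avoiding the Grassmannian altogether and identifying the admissible direction concretely as the ray of $\operatorname{span}(Z)^{\perp}$ meeting $C_u$.
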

\begin{proof}
  Let $u\in \bS$ and $0\ne v\in \bR^3$ be vectors as in the statement, and let
  $H\subset \bR^3$ be a $2$-dimensional linear such that the
  projection $C_u|H$ does not contain $0\in H$ and is not collinear
  with the origin (i.e. is not a segment whose supporting line
  contains $0$). Note that such planes
  $H$ form an open subset of the Grassmannian $\cG(3,2)$ (because
  their defining property is open), and by our
  choice of radius for $\bS$ this subset of $\cG(3,2)$ is non-empty. 

In the plane $H$ the part of the boundary $\partial \co(-C_u|H\bigcup
C_u|H)$ that is not contained in the boundaries of $C_u|H$ and
$-C_u|H$ consists of two open segments forming two opposite edges of a
parallelogram centered at $0$:
  \begin{equation*}
      \begin{tikzpicture}[auto,baseline=(current  bounding
        box.center),scale=1]
        \node (c) at (2.2,.8) {$\scriptstyle C_u|H$};
        \node (-c) at (-2.3,-.8) {$\scriptstyle -C_u|H$};
        \node (aux) at (-1.125,-.425) {};
        \draw (0,0) node[circle, inner sep=1pt, fill=black,
        label={right:{$\scriptstyle 0$}}] (0) {};
        \draw[-] (2,1.5) .. controls (2.5,1.7) and (3.3,1.2) .. (3,.7);
        \draw[-] (3,.7) .. controls (2.7,.2) and (2.625,.25) .. (2.5,.2);
        \draw[-] (2.5,.2) to[bend left=9] (1.5,.4);
        \draw[-] (1.5,.4) .. controls (1,.6) and (1.75,1.4) .. (2,1.5);
        \draw[-] (-2,-1.5) .. controls (-2.5,-1.7) and (-3.3,-1.2) .. (-3,-.7);
        \draw[-] (-3,-.7) .. controls (-2.7,-.2) and (-2.625,-.25) .. (-2.5,-.2);
        \draw[-] (-2.5,-.2) to[bend left=9] (-1.5,-.4);
        \draw[-] (-1.5,-.4) .. controls (-1,-.6) and (-1.75,-1.4) .. (-2,-1.5);
        \draw[-] (2,1.5) -- (-2.5,-.2);
        \draw[-] (-2,-1.5) -- (2.5,.2);
        \draw[->] (0) to node[pos=.3,auto,swap] {$\scriptstyle w$} (aux);
        \draw (-.25,.65) node[circle, inner sep=1pt, fill=black] () {};
        \draw (.25,-.65) node[circle, inner sep=1pt, fill=black] () {};
        \draw[-] (.5,-1.3) -- (-.5,1.3);
        \draw (-.5,1.3) node[circle, inner sep=0pt, fill=black,
        label={right:{$\scriptstyle L$}}] () {};
      \end{tikzpicture}
  \end{equation*}
More formally, as the diagram illustrates, the two segments in question are the tangents to the boundary $\partial\co(-C_u|H\bigcup C_u|H)$ at the points where a hyperplane (i.e. line) $L$ that separates $C_u|H$ and $-C_u|H$ intersects said boundary.

Denoting by $w$ the orthogonal projection of $v$ on $H$, it follows
from the above remark about the boundary of $\co(-C_u|H\bigcup C_u|H)$
that $w$ must be parallel to the two segments and point away from
$C_u|H$. This determines the direction of $w\in H$ uniquely, and since
$w$ was the orthogonal projection of $v$ on any one of the elements of
an open subset of $\cG(3,2)$, we deduce the desired uniqueness of the
direction of $v$. 
\end{proof}

\begin{remark}\label{re.unique}
   Note that the proof of \Cref{le.unique} actually shows more than the statement claims: it shows that the unique direction in which $C_u$ can be translated without leaving $\co(-C_u\bigcup C_u)$ is that of $\varphi(u)$.
\end{remark}

This gives us, for every $u\in \bS$, a unique unit vector $\varphi(u)=\frac
v{\|v\|}\in \bS_1$ (unit sphere centered at $0$) such that translation
in its direction will position $C_u$ so that it contains $0$. The map
$\varphi$ is in fact very well-behaved. Recall that an {\it odd} map
between two spheres is one that sends antipodes to antipodes. With this in hand, we have

\begin{lemma}\label{le.cont}
  The map $\varphi:\bS\to \bS_1$ deduced from \Cref{le.unique} as in
  the discussion above is continuous and odd.  
\end{lemma}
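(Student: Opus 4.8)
The plan is to establish the two properties separately, in both cases leaning on the uniqueness furnished by \Cref{le.unique} together with the central symmetry $\sigma(B_u)=B_u$ of each witness body $B_u:=\co(-C_u\cup C_u)$, where $\sigma\colon x\mapsto -x$. For continuity I would run a compactness-and-uniqueness argument. Fix $u\in\bS$ and a sequence $u_k\to u$. The maps $u\mapsto C_u$ and $u\mapsto B_u$ are continuous for the Hausdorff metric, and since $0\notin C_u$ the associated parking vectors $v_k$ (characterised by $-v_k\in C_{u_k}$ and $C_{u_k}+v_k\subseteq B_{u_k}$) lie in the compact sets $-C_{u_k}$ and are bounded away from $0$. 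Passing to a subsequence, $v_k\to v\ne 0$ and $\varphi(u_k)\to w\in\bS_1$, and Hausdorff continuity of the two constraints forces $0\in C_u+v\subseteq B_u$; thus $v$ is a legitimate parking vector for $u$ with $w=v/\|v\|$, so $w=\varphi(u)$ by \Cref{le.unique}. Every subsequential limit of $\varphi(u_k)$ therefore equals $\varphi(u)$, and continuity follows.

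For oddness the natural device is again the reflection $\sigma$. Applying $\sigma$ to a parking relation $0\in C_u+v\subseteq B_u$ and using $\sigma(B_u)=B_u$, $\sigma(C_u)=-C_u$ gives $0\in(-C_u)+(-v)\subseteq B_u$. By \Cref{re.unique} this says precisely that the unique direction in which the lobe $-C_u$ may be translated inside $B_u$ is $-\varphi(u)$. To conclude $\varphi(-u)=-\varphi(u)$ it then suffices to identify this free direction for $-C_u$ in $B_u$ with the free direction for $C_{-u}$ in $B_{-u}=\co(-C_{-u}\cup C_{-u})$, after which \Cref{le.unique} does the rest.

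The step I expect to be the genuine obstacle is exactly this last identification, since $\sigma$ sends $C_u$ to $-C_u$ rather than to $C_{-u}$, and these two sets coincide only up to the central symmetry of $C$ that we are trying to establish. To bridge the gap I would work with the boundary picture from \Cref{le.unique}: in each admissible plane $H$ the non-trivial part of $\partial B_u$ consists of the two opposite sides of a parallelogram, and $\varphi(u)$ is singled out as the common direction of these sides pointing away from $C_u|H$. Reflecting through $0$ converts the $C_u|H$-lobe into the $-C_u|H$-lobe and reverses the distinguished direction, so the problem reduces to comparing the parallelogram attached to the pair $\{C_{-u},-C_{-u}\}$ with the $\sigma$-image of the one attached to $\{C_u,-C_u\}$ --- equivalently, to matching the separating hyperplanes and their contact segments for the two configurations. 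Carrying out this comparison, and thereby verifying that the two free directions are truly antipodal, is where the convex geometry (rather than soft continuity) has to be invoked; once it is in place, oddness is immediate and, together with the continuity established above, finishes the lemma.
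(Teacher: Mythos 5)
Your continuity argument is correct, and it is essentially the paper's own: the paper packages the same compactness-plus-uniqueness idea as a closed-graph argument (there is a uniform $t>0$ with $C_u+t\varphi(u)\subseteq\co(-C_u\cup C_u)$; the condition $C_u+tv\subseteq\co(-C_u\cup C_u)$ is closed in $(u,v)\in\bS\times\bS_1$; and by \Cref{re.unique} the pairs satisfying it form exactly the graph of $\varphi$, which is therefore closed, hence $\varphi$ is continuous). The genuine gap is in the oddness half, and it sits exactly where you placed it. Your reflection computation establishes only that the unique direction in which the set $-C_u$ can be parked inside $\co(-C_u\cup C_u)$ is $-\varphi(u)$; but $\varphi(-u)$ is by definition the parking direction of $C_{-u}=C-u$ inside its own symmetric hull, and $-C_u=-C-u$ is a different set, equal to $C_{-u}$ only when $C=-C$ --- which is what the whole section is trying to prove. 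Since you do not carry out the comparison sketched in your final paragraph, oddness remains unproved.

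You should know, however, that this is not a defect of your write-up relative to the paper: the paper's proof of oddness consists precisely of the reflection computation in your second paragraph, declared ``immediate,'' with the conflation of $-C_u$ and $C_{-u}$ passed over in silence. Worse, the identification you were hoping to extract from the parallelogram picture cannot be made, because the statement is false as it stands. Take $C$ to be a closed unit ball centered at some $p\neq 0$. It is universally parkable: a centrally symmetric $B$ containing a unit ball centered at $x$ also contains the one centered at $-x$, hence by convexity the one centered at $0$. Yet here $\co(-C_u\cup C_u)$ is a capsule around the segment joining $\pm(p+u)$, and one computes directly that $\varphi(u)=-(p+u)/\|p+u\|$; taking $u\perp p$ gives $\varphi(-u)\neq-\varphi(u)$. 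So neither you nor the paper could have closed this gap. What is true, and what suffices for \Cref{cor.onto} (surjectivity is the only consequence used afterwards), is that any parking vector lies in $-C_u$, so on a sphere $\bS$ of radius $R$ one has $\varphi(u)=-u/\|u\|+O(1/R)$ uniformly; for $R$ large this makes $\varphi$ homotopic to $u\mapsto-u/\|u\|$, hence of degree $-1$, hence onto, with no appeal to oddness or to Borsuk--Ulam at all.
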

\begin{proof}
  The fact that $\varphi$ is odd is immediate: if $-\varphi(u)$
  belongs to $C_u$ then $\varphi(u)$ belongs to $-C_u$, meaning that
  translation of $-C_u$ by $-\varphi(u)$ will position the former so
  that it contains $0$. 

As for the continuity of $\varphi$, note first that because of our choice of $\bS$ (such that no $C_u$ contains the origin) there is a positive lower bound on the length of vectors $v$ such that $0\in C_u+v$ as $u$ ranges over $\bS$. In conclusion, there is some $t>0$ such that for all $u\in \bS$ we have 
\begin{equation}\label{eq:cont}
  C_u+t \varphi(u)\in \co(-C_u\bigcup C_u).
\end{equation}
The condition 
\begin{equation*}
  C_u+tv\in \co(-C_u\bigcup C_u)
\end{equation*}
is closed in $(u,v)\in \bS\times \bS_1$, and together with \Cref{re.unique} above, \Cref{eq:cont} shows that the set of pairs $(u,v)$ that satisfy it is precisely the graph of $\varphi:\bS\to \bS_1$. Since we now know that the graph of the map $\varphi$ between compact spaces is closed, we can conclude that it is continuous. 
\end{proof}

\begin{corollary}\label{cor.onto}
  The map $\varphi:\bS\to \bS_1$ from \Cref{le.cont} is onto. 
\end{corollary}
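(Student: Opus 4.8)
The plan is to recognize this as a textbook application of the Borsuk--Ulam circle of ideas and to invoke it directly. By \Cref{le.cont} the map $\varphi\colon \bS\to \bS_1$ is a continuous odd map, and both $\bS$ and $\bS_1$ are topologically $2$-spheres. The guiding principle is that an odd continuous self-map of a sphere cannot omit a point from its image, and surjectivity is exactly a restatement of this.

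First I would argue by contradiction: suppose $\varphi$ is not onto, so some $p\in \bS_1$ lies outside its image. Because $\varphi$ is odd, omitting $p$ forces $-p$ to be omitted as well, for if $\varphi(u)=-p$ then $\varphi(-u)=-\varphi(u)=p$, contrary to assumption. Hence the image of $\varphi$ avoids the antipodal pair $\{p,-p\}$, i.e. $\varphi$ factors through $\bS_1\setminus\{p,-p\}$.

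Next I would observe that $\bS_1\setminus\{p,-p\}$ admits an antipode-equivariant deformation retraction onto the great circle equidistant from $p$ and $-p$: sliding each point along its meridian toward that equator commutes with the antipodal map, and the antipode restricts on the equatorial circle to the antipodal map of $S^1$. Composing $\varphi$ with this retraction produces a continuous odd map $\bS\to S^1$, that is, a continuous odd map $S^2\to S^1$. This contradicts the Borsuk--Ulam theorem in its standard equivalent form, which forbids odd (equivariant) maps from $S^n$ to $S^{n-1}$. Therefore no omitted point $p$ can exist, and $\varphi$ is onto.

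The only genuine content here is the topological input, so the main obstacle is purely bookkeeping: making sure the reduction is honestly equivariant, so that Borsuk--Ulam applies verbatim. A cleaner alternative, which sidesteps the explicit retraction, is to phrase the whole argument through degree theory: an odd continuous self-map of $S^n$ has odd — in particular nonzero — degree, and any map of nonzero degree between equidimensional spheres is surjective, since a non-surjective map to $S^n$ lands in a contractible set and is thus null-homotopic. Either route closes the argument in a few lines once the appropriate classical theorem is brought to bear.
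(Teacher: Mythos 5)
Your proof is correct, and while both you and the paper ultimately rest on Borsuk--Ulam, your primary reduction is genuinely different from the paper's. You use oddness twice: first to note that a missed point $p$ forces the whole antipodal pair $\{p,-p\}$ to be missed, then to build an antipode-equivariant retraction of $\bS_1\setminus\{p,-p\}$ onto the equatorial circle, producing a continuous odd map from $S^2$ to $S^1$, whose existence the classical non-existence form of Borsuk--Ulam forbids. The paper argues instead at the level of homotopy: by the odd-map version of Borsuk--Ulam (the main result of \cite{dold}), $\varphi$ is not nullhomotopic, whereas a non-surjective map into the $2$-sphere lands in the complement of a point, which is contractible, so the map would be nullhomotopic. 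Your route needs the equivariance bookkeeping you flag (the meridian retraction does commute with the antipodal map, so this checks out), but it invokes only the most elementary form of Borsuk--Ulam; the paper's route is shorter given the cited result, at the cost of quoting the less standard statement that odd maps between equidimensional spheres are homotopically non-trivial. Your degree-theoretic alternative in the final paragraph (odd maps have odd, hence nonzero, degree, and non-surjective maps are nullhomotopic, hence of degree zero) is essentially the paper's argument rephrased in terms of degree.
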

\begin{proof}
  Indeed, \Cref{le.cont} shows that it is a continuous odd map between $2$-spheres. By one
  version of the Borsuk-Ulam theorem (e.g. the main result of \cite{dold}) it follows that the map is not nullhomotopic and hence, because the complement of a point in the $2$-sphere is contractible,  must be onto. 
\end{proof}

\begin{lemma}\label{le.tough}
  For every $u\in \bS$, the orthogonal projection of $C_u$ on $H=\varphi(u)^\perp$ is centrally symmetric. 
\end{lemma}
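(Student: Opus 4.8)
The plan is to argue through support functions, which convert the containment underlying parkability into an inequality that can be read off on the hyperplane $H=\varphi(u)^\perp$. Write $v=\varphi(u)$ and let $h_{C_u}(\theta)=\sup_{x\in C_u}\langle x,\theta\rangle$ be the support function of $C_u$. Two standard facts drive everything. First, for $\theta\in H$ the support function of the projection $C_u|H$, computed inside $H$, equals $h_{C_u}(\theta)$, since $\langle P_H x,\theta\rangle=\langle x,\theta\rangle$ for $\theta\in H$, where $P_H$ is orthogonal projection onto $H$. Second, a compact convex set is centrally symmetric (equal to its own reflection in $0$) exactly when its support function is even. Combining these, the lemma reduces to showing that the odd function
\begin{equation*}
  g(\theta):=h_{C_u}(\theta)-h_{C_u}(-\theta)
\end{equation*}
vanishes identically on $H$.

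Next I would extract the relevant inequality from parkability. By construction of $\varphi$, together with \Cref{eq:cont}, there is some $t>0$ with $C_u+tv\subseteq\co(-C_u\bigcup C_u)$, and the support function of the right-hand side is $\theta\mapsto\max\bigl(h_{C_u}(\theta),h_{C_u}(-\theta)\bigr)$. Hence the containment is equivalent to
\begin{equation*}
  h_{C_u}(\theta)+t\langle v,\theta\rangle\le\max\bigl(h_{C_u}(\theta),h_{C_u}(-\theta)\bigr)\quad\text{for all }\theta.
\end{equation*}
When $\langle v,\theta\rangle\le 0$ this is automatic, but when $\langle v,\theta\rangle>0$ the left-hand side strictly exceeds $h_{C_u}(\theta)$, so the maximum must be the other term and we obtain $g(\theta)\le -t\langle v,\theta\rangle$ throughout the open half-space $\{\langle v,\theta\rangle>0\}$; note this bound is genuinely negative there.

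Finally I would pass to the limit onto $H$. Fix a unit $\theta_0\in H$ and let $\theta_0+\varepsilon v\to\theta_0$ from inside $\{\langle v,\cdot\rangle>0\}$; since $g$ is continuous and $-t\langle v,\theta_0+\varepsilon v\rangle=-t\varepsilon\to 0$, the bound gives $g(\theta_0)\le 0$. Applying the same to $-\theta_0\in H$ yields $g(-\theta_0)\le 0$, and oddness of $g$ turns this into $g(\theta_0)\ge 0$; hence $g\equiv 0$ on $H$, which is precisely central symmetry of $C_u|H$. I expect the main obstacle to be the reduction in the second paragraph: one must treat the $\max$ carefully so as to see that parkability forces the negative bound $g\le -t\langle v,\cdot\rangle$ rather than the useless $g\le 0$, and then confirm that continuity of the support function legitimately transports this interior bound to an equality on the boundary hyperplane $H$. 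It is worth remarking that only the existence of a valid translation direction $v=\varphi(u)$ is used here, not its uniqueness; uniqueness enters earlier, in making $\varphi$ a well-defined map, but plays no role in this particular step.
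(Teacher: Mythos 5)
Your proof is correct, but it takes a genuinely different route from the paper's. The paper argues geometrically and by contradiction: if $C_u|H \ne -C_u|H$, it picks a point $x \in \partial(C_u|H)$ not lying in $-C_u|H$; the line through $x$ parallel to $\varphi(u)$ then meets $-C_u \cup C_u$ in a segment contained entirely in $\partial C_u$, and translating by $\varphi(u)$ pushes one endpoint of that segment outside $\co(-C_u\cup C_u)$, contradicting the containment. Your support-function argument converts the very same containment $C_u + t\varphi(u) \subseteq \co(-C_u\cup C_u)$ into the inequality $h_{C_u}(\theta)+t\langle v,\theta\rangle \le \max\bigl(h_{C_u}(\theta),h_{C_u}(-\theta)\bigr)$, identifies which term must achieve the maximum on the open half-space $\{\langle v,\theta\rangle>0\}$, and then passes to the limit onto $H$ and invokes oddness of $g$. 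What your version buys: it replaces the paper's one delicate geometric step (that the translation expels an endpoint of the boundary segment from the hull, which the paper justifies essentially by a picture) with routine, fully rigorous support-function facts, and it makes explicit that only the existence of a valid translation direction is used, never its uniqueness. What the paper's version buys: brevity and geometric transparency, with no analytic machinery. One minor remark: the strictly negative bound $g(\theta)\le -t\langle v,\theta\rangle$ is not actually essential to your limiting step; once you know the maximum is $h_{C_u}(-\theta)$ on the open half-space, even the weaker conclusion $g\le 0$ there passes to $g\le 0$ on $H$ by continuity, and oddness on the linear hyperplane $H$ finishes the argument exactly as you wrote.
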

\begin{proof}
We have to show that the projections of $C_u$ and $-C_u$ on $H$ coincide. If they do not, then there is a point $x$ on the boundary $\partial(C_u|H)$ that does not belong to $-C_u|H$. The line in $\bR^3$ through $x$ and orthogonal to $H$ (and hence parallel to $\varphi(u)$) intersects $-C_u\bigcup C_u$ along a segment (possibly degenerate, i.e. a single point) contained in the boundary $\partial C_u$. 
  \begin{equation*}
      \begin{tikzpicture}[auto,baseline=(current  bounding
        box.center),scale=1]
        \node (c) at (-.7,1.4) {$\scriptstyle C_u$};
        \node (-c) at (.8,-1.5) {$\scriptstyle -C_u$};
        \draw (0,0) node[circle, inner sep=1pt, fill=black, label={45:{$\scriptstyle 0$}}] () {};
        \draw[-] (-2,2) -- (-2,1.5);
        \draw[-] (-2,2) .. controls (-2,3) and (.5,2) .. (.5,1);
        \draw[-] (-2,1.5) .. controls (-2,1) and (.5,0) .. (.5,1);
        \draw[-] (2,-2) -- (2,-1.5);
        \draw[-] (2,-2) .. controls (2,-3) and (-.5,-2) .. (-.5,-1);
        \draw[-] (2,-1.5) .. controls (2,-1) and (-.5,0) .. (-.5,-1);
        \draw[-] (-3,0) -- (3,0);
        \node (H) at (2.8,.2) {$\scriptstyle H$};
        \draw[->] (2,2) to node[pos=.5] {$\scriptstyle \phi(u)$} (2,1);
        \draw[-] (-2,2.5) -- (-2,-.3);
        \draw (-2,0) node[circle, inner sep=1pt, fill=black, label={135:{$\scriptstyle x$}}] () {};
      \end{tikzpicture}
  \end{equation*}
Translation by $\varphi(u)$ will move one of the endpoints of the segment outside of $\co(-C_u\bigcup C_u)$, contradicting 
\begin{equation*}
  C_u+\varphi(u)\subset \co(-C_u\bigcup C_u). 
\end{equation*}
It follows from the contradiction that $-C_u|H=C_u|H$, as desired.
\end{proof}

We are now ready to complete the proof of the main result of this section.

\begin{proof_of_symbis}
  \Cref{le.tough,cor.onto} show that the orthogonal projection of $C$ on every hyperplane in $\bR^3$ has a center of symmetry and hence, according to \Cref{cor.centr}, so does $C$.  
\end{proof_of_symbis}

\section{Parkability and finite-dimensional Banach spaces}\label{se.park}

Here, we prove \Cref{th.ell}. Recall the statement:

\begin{theorem}\label{th.ell_bis}
For a convex body $B\subset \bR^n$, $n\ge 3$ the following properties
are equivalent.  
 \begin{enumerate}
 \renewcommand{\labelenumi}{(\roman{enumi})}  
   \item $B$ is an ellipsoid centered at $0$.  
   \item $B$ is centrally symmetric and its intersection with every hyperplane has a center of symmetry (provided it is not empty). 
   \item Every closed convex subset of $B$ is parkable in $B$. 
 \end{enumerate}  
\end{theorem}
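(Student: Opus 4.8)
The plan is to close the cycle of implications: \Cref{pr.ell} already supplies (i) $\Rightarrow$ (ii) $\Rightarrow$ (iii), so it suffices to prove (iii) $\Rightarrow$ (i). I would organize this into three stages — first extracting central symmetry from (iii), then manufacturing an orthogonality relation on the subspaces of $\bR^n$ out of \Cref{pr.aux}, and finally feeding that relation into a Hilbert-space recognition theorem.

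First I would show that (iii) forces $B$ to be centrally symmetric. By \Cref{cor.centr} it is enough to check that the orthogonal projection of $B$ onto every $2$-dimensional subspace has a center of symmetry, so the problem reduces to the plane. There the same analysis of $\co(-K\cup K)$ used in \Cref{se.univ} applies: if a planar projection $K$ of $B$ failed to be centrally symmetric, the boundary of $\co(-K\cup K)$ would exhibit the parallelogram behaviour of \Cref{le.unique}, and one could produce a closed convex subset of $B$ that cannot be parked, contradicting (iii). Hence every planar projection is centrally symmetric and $B$ is as well. From this point on I would replace $B$ by its centrally symmetric self and work inside the Banach space $(\bR^n,\|\cdot\|_B)$.

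Next, with \Cref{pr.aux} in hand, to each linear hyperplane $H$ I would attach the line $\ell(H)$ through the origin carrying the (collinear) centers of symmetry of the slices $(H+x)\cap B$. Because the $x=0$ slice is symmetric about $0$, this line is transverse to $H$, so $\ell(H)\oplus H=\bR^n$; in particular no line is incident to its image, which will later guarantee definiteness. The key geometric payoff is that the projection $P$ onto $\ell(H)$ along $H$ is $B$-contractive: for $b\in B$ the point $Pb$ is exactly the center of symmetry of the slice of $B$ through $b$ parallel to $H$, hence lies in that slice and in particular in $B$, so $P(B)\subseteq B$. The genuinely delicate part here is to promote the assignment $H\mapsto\ell(H)$ to an inclusion-reversing involution (a polarity) on the whole lattice of subspaces, defining $S^\perp$ for every $S$ and checking that $S\mapsto S^\perp$ is involutive and order-reversing, and to upgrade the contractivity computation above from lines to arbitrary subspaces by an analogous slicing argument. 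I expect this coherence-plus-contractivity step to be the main obstacle, since it is exactly where the metric content of parkability has to be converted into the uniform statement that the orthogonal projection onto each subspace $S$ along $S^\perp$ does not increase $\|\cdot\|_B$.

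Granting that every subspace of $(\bR^n,\|\cdot\|_B)$ is the range of a norm-one projection, I would conclude by invoking the Kakutani-type characterization of Hilbert spaces among Banach spaces of dimension $\ge 3$: a space all of whose subspaces are $1$-complemented is Hilbertian. Equivalently one may use \Cref{th.istr}: the polarity yields a nondegenerate symmetric bilinear form, which is anisotropic — hence definite — because no line lies in its own orthogonal hyperplane; taking $T\mapsto T^*$ to be the adjoint for this form makes axioms (1)--(3) automatic, while axiom (4) is precisely the contractivity of the self-adjoint projections established above. Either route shows $\|\cdot\|_B$ is induced by an inner product, so that $B$ is an ellipsoid centered at $0$, establishing (iii) $\Rightarrow$ (i) and completing the equivalence.
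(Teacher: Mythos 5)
Your overall architecture is the same as the paper's (central symmetry first, then a lattice polarity built from \Cref{pr.aux}, then a Hilbert-space recognition theorem), and two of your ingredients are sound: the observation that the projection onto $\ell(H)$ along $H$ carries each $b\in B$ to the center of symmetry of the slice of $B$ through $b$, hence into $B$, is exactly the right contractivity mechanism, and closing with either Kakutani's ``every subspace is $1$-complemented'' criterion or with \Cref{th.istr} is a legitimate endgame. But there are two genuine gaps. The first is your derivation of central symmetry from (iii). \Cref{le.unique} and the parallelogram analysis of $\partial\co(-C_u\cup C_u)$ belong to the \emph{universal parkability} setting of \Cref{se.univ}, where translates of $C$ are parked inside the centrally symmetric hull $\co(-C_u\cup C_u)$; condition (iii) parks subsets of $B$ inside $B$ itself, which at this stage is not known to be centrally symmetric, and $\co(-K\cup K)$ is in general strictly larger than $K$. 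The assertion that failure of central symmetry ``produces a closed convex subset of $B$ that cannot be parked'' is precisely what must be proved, and it is not cosmetic: the paper proves it by parking the segments joining the two supporting lines of $K$ orthogonal to each direction $v$, deducing $0\in\co\bigl((L_1\cap K)\cup(L_2\cap K)\bigr)$ (\Cref{pr.2}), and then running the nontrivial convex-function derivative comparison of \Cref{le.supports} to force $K=-K$. No idea playing this role appears in your sketch.

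The second gap is that you explicitly defer what you call the ``main obstacle'' --- promoting $H\mapsto\ell(H)$ to an inclusion-reversing involution on the whole subspace lattice, and upgrading contractivity from projections onto lines to projections onto planes --- and this is where the bulk of the paper's remaining work lies. Concretely, one needs (a) $H''=H$, which the paper obtains by noting that \Cref{pr.aux_bis} makes the linear involution fixing $\ell(H)$ pointwise and acting by $-1$ on $H$ a symmetry of $B$, so that its negative is also a symmetry (\Cref{le.aux_bis_dual}); and (b) surjectivity of $H\mapsto\ell(H)$ onto all lines, which the paper proves by a Borsuk--Ulam argument (\Cref{pr.aux_bis_dual}). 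Without (a) and (b) you have neither a well-defined polarity nor the cylinder argument giving $\|P\|\le 1$ for rank-two projections, so the hypotheses of either recognition theorem are not established. Relatedly, \Cref{pr.aux}, which you take ``in hand,'' is only proven in the paper for $n=3$ (as \Cref{pr.aux_bis}, after the reduction of \Cref{pr.n=3}); since you work directly in $\bR^n$, you owe either an $n$-dimensional proof of it or a similar reduction to $n=3$.
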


We already know from \Cref{pr.ell} that (i) implies (ii), which in
turn implies (iii); hence, it suffices to go backwards. We once again
specialize to $n=3$ in order to simplify some of the proofs and
language.

\begin{proposition}\label{pr.n=3}
  If \Cref{th.ell_bis} holds for $n=3$ then it holds in general. 
\end{proposition}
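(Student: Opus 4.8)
The plan is to prove this as a pure dimension-reduction statement. Since the implications (i) $\Rightarrow$ (ii) $\Rightarrow$ (iii) already hold in every dimension by \Cref{pr.ell}, it suffices to establish (iii) $\Rightarrow$ (i) for an arbitrary convex body $B\subset\bR^n$ with $n>3$, assuming the full equivalence in \Cref{th.ell_bis} is known in $\bR^3$. The guiding idea is that property (iii) is stable under taking shadows: I will show it descends to orthogonal projections, apply the three-dimensional case to the three-dimensional shadows of $B$, and then reassemble the resulting ellipsoidal shadows into a global ellipsoid.

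The key step is the inheritance of (iii) under projection, and it is the technical heart of the argument. Fix a linear subspace $V\subseteq\bR^n$ with orthogonal projection $\pi\colon\bR^n\to V$, and write $B|V=\pi(B)$; this is a convex body in $V$ containing $0$, since $\pi$ is surjective onto $V$ (so it carries the interior of $B$ to a nonempty open subset of $V$) and $B$ is compact and convex. Given a closed convex $C'\subseteq B|V$, I set $C:=\pi^{-1}(C')\cap B$, a compact convex subset of $B$. By (iii), $C$ is parkable in $B$ in the sense of \Cref{def.parkable}, so there is a $v$ with $0\in C+v\subseteq B$. Applying $\pi$ and using the elementary identity $\pi(\pi^{-1}(C')\cap B)=C'\cap\pi(B)=C'$ (the last equality because $C'\subseteq B|V$), I obtain $0\in C'+\pi(v)\subseteq B|V$ with $\pi(v)\in V$, so $C'$ is parkable in $B|V$. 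Hence $B|V$ again satisfies (iii). The only delicate point here is that parkability is an affine translation condition, so one must check that the lifted set $C$ and the parking vector $v$ project correctly; the identity above is exactly what makes this work.

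With inheritance established the rest is short. For every three-dimensional linear subspace $V$, the shadow $B|V$ satisfies (iii), so by the assumed $n=3$ case it is an ellipsoid centered at $0$. Thus every three-dimensional (and, by further projection, every planar) shadow of $B$ is an ellipsoid, and I then invoke the classical characterization of ellipsoids by their shadows---all orthogonal projections onto $2$- or $3$-dimensional subspaces being ellipsoids forces the body itself to be an ellipsoid; see the projection-theoretic results in \cite{tomo}---to conclude that $B$ is an ellipsoid. That it is centered at $0$ follows from \Cref{cor.centr}, which endows $B$ with a center of symmetry whose projection into each $V$ is the center of the ellipsoid $B|V$, namely $0$; being $0$ in every shadow, the center must be $0$ itself.

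The step I expect to be the main obstacle is the reassembly: passing from ``all low-dimensional shadows are ellipsoids'' to ``$B$ is an ellipsoid'' is genuinely nontrivial and is the one place where an external ellipsoid-characterization result is needed. In particular it cannot be extracted from \Cref{cor.centr} alone, which only delivers central symmetry of $B$, not its ellipsoidal shape. The projection-inheritance argument, by contrast, is elementary but must be carried out with care for the reasons noted above.
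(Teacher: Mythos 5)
Your proof is correct and takes essentially the same route as the paper's: both establish that property (iii) passes to orthogonal projections onto three-dimensional subspaces, apply the assumed $n=3$ case to conclude that every such shadow is an ellipsoid, and then invoke the projection-theoretic characterization of ellipsoids from \cite{tomo} (the paper cites Theorem 3.1.7 there) to recover that $B$ itself is an ellipsoid. Your added detail on the inheritance step and on pinning the center at the origin merely makes explicit what the paper dismisses as clear.
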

\begin{proof}
  Property (iii) is clearly preserved by passing to orthogonal
  projections from $\bR^n$ down to its three-dimensional subspaces. If
  the implication (iii) $\Rightarrow$ (i) holds in $\bR^3$, then we
  know that all orthogonal projections of $B$ on such subspaces are
  ellipsoids. This implies that $B$ itself is an ellipsoid by \cite[Theorem 3.1.7]{tomo}. 
\end{proof}

As explained in the introduction, the first priority will be to prove that (iii)
implies the property of being centrally symmetric so as to be able to think of the
convex body as the unit ball of a Banach space structure on
$\bR^n$. The following lemma provides the first step in this direction.

\begin{lemma}\label{le.2impln}
  If the implication (iii) $\Rightarrow$ (centrally symmetric) holds
  in $\bR^2$, then it holds for all $\bR^n$, $n\ge 3$. 
\end{lemma}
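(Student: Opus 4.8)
The plan is to run the same dimension-reduction strategy already used for \Cref{pr.n=3} and for the reduction of \Cref{th.sym_bis} to $n=3$, but now projecting all the way down to planes and invoking \Cref{cor.centr} with $m=2$. Concretely, I would start from a convex body $B\subset\bR^n$, $n\ge 3$, satisfying (iii), fix a $2$-dimensional linear subspace $H\subset\bR^n$ with orthogonal projection $\pi=\pi_H$, and show that the projection $B|H=\pi(B)$ again satisfies (iii), this time as a convex body in $H\cong\bR^2$.

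The preservation of (iii) under projection is the step to nail down first (it is asserted without proof in \Cref{pr.n=3}). Given a closed convex $D\subseteq B|H$, I would lift it to $D'=\pi^{-1}(D)\cap B$, a closed convex subset of $B$ with $\pi(D')=D$; applying (iii) to $D'$ yields a translate $D'+w\subseteq B$ containing $0$, and projecting gives $D+\pi(w)\subseteq B|H$ containing $0_H=\pi(0)$, so $D$ is parkable in $B|H$. Since $B|H$ is compact with non-empty interior in $H$ and contains $0_H$, it is a convex body, so the hypothesis (the $\bR^2$ case of the implication) applies and $B|H$ is centrally symmetric, i.e.\ has $0_H$ as a center of symmetry.

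At this point \Cref{cor.centr}, taken with $m=2$ (legitimate since $n\ge 3$), tells me that $B$ itself has a center of symmetry; call it $p$. The remaining, and genuinely necessary, step is to check $p=0$: \Cref{cor.centr} only produces \emph{some} center, whereas central symmetry demands the center be the origin. Here I would use that the projection of a center of symmetry is a center of symmetry of the projection: from $B=2p-B$ one gets $B|H=2\pi(p)-B|H$, so $\pi(p)$ is a center of $B|H$. A convex body has a unique center of symmetry (if $p,q$ are both centers then $B=B+2(p-q)$ forces $p=q$ by boundedness), and that of $B|H$ is $0_H$, so $\pi_H(p)=0$ for every plane $H$, whence $p=0$. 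Therefore $B$ is centrally symmetric.

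The main obstacle is precisely this last identification $p=0$: the dimension reduction itself is routine once preservation of (iii) under projection is recorded, but one must be careful that \Cref{cor.centr} yields only the \emph{existence} of a center, and supply the short uniqueness-of-center argument to pin it down at the origin.
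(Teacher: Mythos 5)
Your proposal is correct and follows essentially the same route as the paper's proof: project to planes, apply the $\bR^2$ hypothesis to each projection, and conclude via \Cref{cor.centr}. The only difference is that you spell out two details the paper leaves implicit --- the lifting argument showing (iii) survives projection (the paper says ``clearly''), and the uniqueness-of-center argument pinning the center at the origin (the paper's ``variant thereof'') --- both of which you handle correctly.
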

\begin{proof}
  Let $B\subset \bR^n$, $n\ge 3$ be a convex body satisfying condition
  (iii) of the theorem, and suppose (iii) implies central symmetry in
  the plane. 

Clearly, (iii) is preserved upon projecting down to any plane, and
hence, by assumption, all projections of $B$ on planes are centrally
symmetric. The conclusion that $B$ itself is then follows from
\Cref{cor.centr} (or rather a variant thereof whereby the center of
symmetry is in fact the origin).  
\end{proof}

\begin{remark}\label{re.hyp}
As seen in \cite[Lemma 29]{berg}, condition (iii) of \Cref{th.ell_bis}
is equivalent to the fact that every hyperplane section of $B$ is
parkable. We will henceforth use this equivalence without further
comment when convenient.   
\end{remark}

\begin{proposition}\label{pr.2}
  Let $B\subset \bR^2$ be a convex body such that every intersection
  of $B$ with a line is parkable in $B$. Then, $B$ is centrally
  symmetric. 
\end{proposition}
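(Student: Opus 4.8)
The plan is to reformulate the parkability hypothesis as a purely metric condition on the chords of $B$, and then extract central symmetry from it via the support function. First I would record the elementary observation underlying everything: a segment $C$ of length $\ell$ in a direction $\hat d$ is parkable in $B$ precisely when $\ell$ does not exceed the length of the chord $B\cap\bR\hat d$ of $B$ through the origin in direction $\hat d$. Indeed, a translate of $C$ containing $0$ and lying in $B$ must lie on the line $\bR\hat d$, hence inside that origin chord, and conversely any sub-segment of the origin chord of the right length can be slid so as to straddle $0$. Applying this to $C=B\cap\ell$ for every line $\ell$, and noting that the longest chord of $B$ in a given direction dominates all parallel ones, the hypothesis becomes: \emph{for every direction $\hat d$, the chord of $B$ through the origin is a longest chord of $B$ in direction $\hat d$.} A short preliminary argument then rules out $0\in\partial B$ (if $0$ were a boundary point, a suitable direction in the tangent cone makes the origin chord degenerate to a point or radial segment that cannot be longest), so that $0\in\mathrm{int}(B)$ and the radial and support functions are well behaved.

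Next I would convert ``longest chord in direction $\hat d$'' into a boundary condition. Fixing $\hat d$ and sweeping the parallel chords by their perpendicular offset $s$, the chord-length function $g(s)$ is concave (it is the difference of a concave and a convex boundary function), so $g$ is maximized at $s=0$ iff the first-order condition holds there, which says exactly that $B$ admits parallel supporting lines at the two endpoints of the origin chord; in other words, every chord of $B$ through $0$ is an affine diameter. Passing to the Gauss map, the endpoint $X(\phi)$ of such a chord having outer unit normal $u(\phi)=(\cos\phi,\sin\phi)$ and the opposite endpoint carry parallel supporting lines, hence opposite outer normals; thus the opposite endpoint is $X(\phi+\pi)$, and $X(\phi)$, the origin, and $X(\phi+\pi)$ are collinear for a.e. $\phi$.

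The analytic heart is then the support-function computation. Writing $X(\phi)=h(\phi)u(\phi)+h'(\phi)u'(\phi)$ with $u'(\phi)=(-\sin\phi,\cos\phi)$, one has $X(\phi+\pi)=-\bigl(h(\phi+\pi)u(\phi)+h'(\phi+\pi)u'(\phi)\bigr)$, and collinearity of $X(\phi)$ and $X(\phi+\pi)$ is the vanishing of the relevant $2\times 2$ determinant, i.e. $h'(\phi)\,h(\phi+\pi)=h(\phi)\,h'(\phi+\pi)$. Since $0\in\mathrm{int}(B)$, the support function $h$ is Lipschitz in $\phi$ and bounded below by a positive constant, so $\phi\mapsto h(\phi)/h(\phi+\pi)$ is absolutely continuous with a.e. vanishing derivative, hence constant; evaluating at $\phi$ and at $\phi+\pi$ shows this constant squares to $1$ and is positive, so it equals $1$. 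Thus $h(\phi)=h(\phi+\pi)$ for all $\phi$, which is exactly the assertion that $B=-B$, i.e. that $B$ is centrally symmetric.

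The step I expect to be the main obstacle is the rigor of the middle paragraph for general (non-smooth, non-strictly-convex) bodies: the Gauss map is then multivalued, the formula for $X(\phi)$ holds only where $h$ is differentiable, and the affine-diameter characterization must be phrased through subdifferentials or one-sided derivatives of $g$. The saving grace is that $h$ is Lipschitz, hence differentiable almost everywhere and absolutely continuous, so the determinant identity need only hold a.e. in order to force $h(\phi)/h(\phi+\pi)$ to be constant; moreover the metric reformulation of the first paragraph is completely insensitive to smoothness, which is what makes this soft conclusion legitimate.
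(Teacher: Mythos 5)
Your route differs from the paper's (which proves, for every direction of parallel supporting lines, that $0\in\co((L_1\cap B)\cup(L_2\cap B))$ and then runs an integration argument on the convex functions bounding $\partial B$ and $\partial(-B)$), and your closing support-function computation is fine; but the middle paragraph has a genuine gap. From ``every chord through $0$ admits parallel supporting lines at its endpoints'' you conclude that the endpoint opposite to $X(\phi)$ is $X(\phi+\pi)$. That deduction is valid only if the supporting line at $X(\phi)$ is \emph{unique}, i.e.\ $X(\phi)$ is a smooth boundary point: the affine-diameter property hands you \emph{some} pair of parallel supporting lines at the two endpoints, and at a corner their common normal need not be $\pm u(\phi)$. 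Your ``saving grace'' (Lipschitzness and a.e.\ differentiability of $h$) addresses the wrong condition: differentiability of $h$ at $\phi$ says the face $F(u(\phi))$ is a singleton, not that $X(\phi)$ has a unique normal; these are dual conditions, and they disagree on a set of \emph{full} measure exactly when $B$ has corners. Bodies with corners do satisfy the hypothesis --- e.g.\ the square $[-1,1]^2$ --- so your proof must cover them, and there $X(\phi)$ is a vertex for a.e.\ $\phi$, so as written your argument yields the determinant identity only on a null set. The collinearity you want is true, but to get it you should bypass the affine-diameter reformulation (which forgets \emph{which} faces pair up) and use parkability directly, in the spirit of the paper's Claim: for a.e.\ $\phi$ both faces $F(\pm u(\phi))$ are singletons $\{X(\phi)\}$, $\{X(\phi+\pi)\}$; the segment joining them is a full chord of $B$, being the intersection of the line through them with the slab $\{-h(\phi+\pi)\le x\cdot u(\phi)\le h(\phi)\}$ containing $B$; if $w$ parks it, then $(X(\phi)+w)\cdot u(\phi)\le h(\phi)$ and $(X(\phi+\pi)+w)\cdot u(\phi)\ge -h(\phi+\pi)$ force $w\cdot u(\phi)=0$, whence $X(\phi)+w\in F(u(\phi))$ and so $w=0$; thus $0$ lies on the segment, which is your collinearity.

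A second, smaller gap is the parenthetical ruling out $0\in\partial B$. The claim that a radial origin chord ``cannot be longest'' in its direction is false: for the triangle with vertices $(0,0)$, $(1,1)$, $(1,-1)$ the radial chord from $(0,0)$ to $(1,0)$ is a longest chord in the horizontal direction, and for a square with $0$ at the midpoint of an edge both the edge direction and its perpendicular give longest origin chords. Under the full hypothesis one can indeed show $0\in\mathrm{int}(B)$ (rotate the chord direction toward a direction parallel to a supporting line at $0$ and compare the limiting origin-chord length with the lengths of nearby parallel chords), but this requires an actual argument, and your method genuinely needs the conclusion --- $h$ must be bounded below by a positive constant for $h(\phi)/h(\phi+\pi)$ to be absolutely continuous --- whereas the paper's proof never has to exclude this case at all.
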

\begin{proof}
Let $v\in \bR^2$ be a non-zero vector, and $H_1$ and $H_2$ the two $B$-supporting lines that are orthogonal to $v$. 

{\bf Claim: The convex hull of the segments $H_1\cap B$ and $H_2\cap
  B$ contains the origin.} Indeed, if $p_1\in H_1\cap B$ and $p_2\in
H_2\cap B$ are two points on the boundary of $B$, then by our
hypothesis some translate $\overline{p_1p_2}+w\subset B$ contains the
origin. Because $H_i$ are supporting lines the translates $p_1+w$ and
$p_2+w$ belong to $H_1$ and $H_2$ respectively (i.e. if it is
non-zero, then $w$ is parallel to $H_1$ and $H_2$); this finishes the proof of the claim.  
  \begin{equation*}
      \begin{tikzpicture}[auto,baseline=(current  bounding
        box.center),scale=.8]
        \draw (0,-1) node[circle, inner sep=0pt, fill=black, label={225:{$\scriptstyle H_1$}}] () {};
        \draw (-1.2,.2) node[circle, inner sep=1pt, fill=black, label={225:{$\scriptstyle p_1$}}] (p1) {};
        \draw (4,-2) node[circle, inner sep=0pt, fill=black, label={45:{$\scriptstyle H_2$}}] () {};
        \draw (2.5,-.5) node[circle, inner sep=1pt, fill=black, label={45:{$\scriptstyle p_2$}}] (p2) {};
        \draw (.3,.3) node[circle, inner sep=1pt, fill=black, label={45:{$\scriptstyle 0$}}] () {};
        \draw[-] (-2,1) -- (-1,0);
        \draw[-] (1,1) -- (3,-1);
        \draw[-] (-2,1) -- (1,1);
        \draw[-] (-1,0) -- (3,-1);
        \draw[-] (-2,1) .. controls (-2.5,1.5) and (.5,1.5) .. (1,1);
        \draw[-] (-1,0) .. controls (-.5,-.5) and (3.5,-1.5) .. (3,-1);
        \draw[-] (-2.5,1.5) -- (0,-1);
        \draw[-] (0,2) -- (4,-2);
        \draw[->] (4,.5) to node[pos=.5,auto,swap] {$\scriptstyle w$} (3.5,1);
      \end{tikzpicture}
  \end{equation*}
Given the claim, the conclusion now follows from the technical \Cref{le.supports} below. 
\end{proof}

\begin{lemma}\label{le.supports}
Let $B\subset \bR^2$ be a convex body. Suppose that for every non-zero $v\in \bR^2$ the two supporting lines $L_1=L_1(v)$ and $L_2=L_2(v)$ of $B$ that are orthogonal to $v$ satisfy
\begin{equation*}
  0\in \co((L_1\cap B)\cup(L_2\cap B)). 
\end{equation*}
Then, $B$ is centrally symmetric. 
\end{lemma}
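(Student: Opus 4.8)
The plan is to pass to the support function of $B$ and reduce the hypothesis to an elementary first-order identity. Geometrically the hypothesis says that every \emph{affine diameter} of $B$ — every chord joining the contact points of a pair of parallel supporting lines — passes through the origin, and we want to upgrade this to full central symmetry. Write $H(\theta)=h(e_\theta)$ for the support function, where $e_\theta=(\cos\theta,\sin\theta)$ and $e_\theta^\perp=(-\sin\theta,\cos\theta)$; since $0\in B$ we have $H\ge 0$. A short preliminary argument shows the hypothesis forces $0$ into the interior of $B$, so that $H>0$ everywhere: if $0\in\partial B$, all points of the two contact sets lie in a closed half-plane bounded by a supporting line $\ell$ through $0$, so any convex combination equal to $0$ is pinned to $\ell\cap B$, and choosing a direction $v$ whose supporting lines meet $B$ away from $\ell$ produces a violation. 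Being a support function of a bounded body with $0$ in its interior, $H$ is Lipschitz and bounded away from $0$, hence differentiable off an at most countable set $N\subset\bR/2\pi\bZ$ (the kinks corresponding to edges of $B$); at each $\theta\notin N$ the supporting line with outer normal $e_\theta$ touches $B$ in the single point $p(\theta)=H(\theta)e_\theta+H'(\theta)e_\theta^\perp$.

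Next I would extract the key identity. For $\theta$ with $\theta,\theta+\pi\notin N$ the two contact sets $L_1\cap B$ and $L_2\cap B$ are the single points $p(\theta)$ and $p(\theta+\pi)$, so the hypothesis reads $0\in[p(\theta),p(\theta+\pi)]$. As $0\in\mathrm{int}\,B$ is distinct from the two boundary points, this forces $p(\theta+\pi)=-s\,p(\theta)$ for some $s=s(\theta)>0$. Expanding both sides in the orthonormal frame $\{e_\theta,e_\theta^\perp\}$, using $e_{\theta+\pi}=-e_\theta$ and $e_{\theta+\pi}^\perp=-e_\theta^\perp$, yields the two scalar equations $H(\theta+\pi)=s\,H(\theta)$ and $H'(\theta+\pi)=s\,H'(\theta)$; eliminating $s$ gives
\begin{equation*}
  H'(\theta+\pi)\,H(\theta)=H(\theta+\pi)\,H'(\theta)\qquad\text{for a.e. }\theta.
\end{equation*}

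Finally I would integrate. This identity says exactly that the derivative of $s(\theta)=H(\theta+\pi)/H(\theta)$ vanishes almost everywhere. Since $H$ is Lipschitz and bounded below by a positive constant, $s$ is itself Lipschitz, hence absolutely continuous, so $s\equiv c$ for a constant $c>0$. Evaluating $H(\theta+\pi)=c\,H(\theta)$ twice and invoking $2\pi$-periodicity of $H$ forces $c^2=1$, whence $c=1$; thus $H(\theta+\pi)=H(\theta)$ for all $\theta$, i.e. $h$ is even, i.e. $B=-B$, as desired.

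The conceptual content is entirely in the second paragraph; the main obstacle is the regularity bookkeeping surrounding it. One must ensure that the convenient parametrization $p(\theta)$ and the resulting identity are legitimately available on a co-countable — hence full-measure — set, despite $B$ possibly having edges (kinks of $H$) and vertices (flat stretches of $H$), and that the passage from an almost-everywhere identity to the global conclusion $H(\theta+\pi)=H(\theta)$ is sound, which is precisely what the absolute continuity of $s$ supplies. The preliminary reduction to $0\in\mathrm{int}\,B$, needed so that $H$ stays positive and the chord $[p(\theta),p(\theta+\pi)]$ genuinely straddles the origin, is the other point requiring care.
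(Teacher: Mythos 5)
Your proof is correct, but it follows a genuinely different route from the paper's. The paper argues by contradiction in Cartesian coordinates: it first uses the hypothesis to show that $-B\cup B$ is convex (a $B$-supporting line at a point of $\partial B\cap\partial(-B)$ that failed to support $-B$ would give a violating direction $v$), which produces common parallel supporting lines of $B$ and $-B$ at antipodal points $\pm p$; it then writes the lower boundary arcs of $B$ and $-B$ between $-p$ and $p$ as graphs of convex functions $f<0$ and $g<0$ with $g<f$, reads the hypothesis as saying that tangents at points of the two graphs collinear with $0$ are parallel (so $f'(x)=g'(y)$ with $y<x$), uses monotonicity of $g'$ to get $f'\le g'$ a.e.\ on $(-a,0]$, and integrates from $-a$ (where $f=g=0$) to contradict $g(0)<f(0)$. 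You instead work in polar form with the support function: the hypothesis at generic antipodal normals forces the two contact points to be proportional through the origin, i.e.\ the Wronskian-type identity $H'(\theta+\pi)H(\theta)=H(\theta+\pi)H'(\theta)$ a.e., so the Lipschitz (hence absolutely continuous) ratio $s=H(\cdot+\pi)/H$ is constant, and $2\pi$-periodicity pins the constant to $1$. Your argument is direct rather than by contradiction, treats all directions at once, and avoids the convexity of $-B\cup B$ entirely; its price is the preliminary reduction to $0\in\mathrm{int}\,B$, which the paper never needs to isolate. That reduction is sound but your one-line sketch of it deserves fleshing out: after the pinning argument one must still \emph{produce} a direction $v$ for which the contact sets miss $\ell\cap B$, which works because the contact sets can only meet $\ell$ at the (at most two) endpoints of the face $\ell\cap B$, and normal cones of a planar body with nonempty interior have angular width less than $\pi$, so some $\pm v$ avoids both. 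Two minor regularity remarks, neither affecting correctness: Lipschitzness alone gives only a.e.\ differentiability of $H$ (the co-countable set comes, as you say, from $B$ having at most countably many edges, and a.e.\ would suffice for your argument anyway), and a vertex of $B$ corresponds to a cosine arc of $H$ rather than a flat stretch.
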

\begin{proof}
We have to show that $B$ and $-B$ coincide. Assuming they do not, the condition on $B$ at least ensures that the union $-B\cup B$ is convex. Indeed, otherwise for some $p\in \partial B\cap \partial(-B)$ some $B$-supporting line $L$ through $p$ would not be $-B$-supporting, and hence $v\perp L$ would violate the hypothesis.

In conclusion, $B$ and $-B$ admit common parallel supporting lines $L$ and $-L$ at two points $p$ and $-p$ respectively in the intersection $\partial B\cap \partial(-B)$. We will moreover choose coordinates in $\bR^2$ so that the segment $\overline{(-p)p}$ is horizontal and $L$ is vertical. We will derive a contradiction from the assumption that the portions of $\partial B$ and $\partial(-B)$ lying in the lower half plane are distinct. 
  \begin{equation*}
      \begin{tikzpicture}[auto,baseline=(current  bounding
        box.center),scale=1]
        \draw (0,0) node[circle, inner sep=1pt, fill=black, label={90:{$\scriptstyle 0$}}] () {};
        \draw (-1,0) node[circle, inner sep=1pt, fill=black, label={135:{$\scriptstyle -p$}}] () {};
        \draw (1,0) node[circle, inner sep=1pt, fill=black, label={45:{$\scriptstyle p$}}] () {};
        \draw (-1,-2) node[circle, inner sep=0pt, fill=black, label={225:{$\scriptstyle -L$}}] () {};
        \draw (1,-2) node[circle, inner sep=0pt, fill=black, label={315:{$\scriptstyle L$}}] () {};
        \draw (.5,-1) node[circle, inner sep=0pt, fill=black, label={100:{$\scriptstyle B$}}] () {};
        \draw (-.5,-2) node[circle, inner sep=0pt, fill=black, label={85:{$\scriptstyle -B$}}] () {};
        \draw[-] (-1,1) -- (-1,-2);
        \draw[-] (1,1) -- (1,-2);
        \draw[-] (-2,0) -- (2,0);
        \draw[-] (-1,0) .. controls (-1,-.5) and (.1,-1) .. (.5,-1);
        \draw[-] (.5,-1) .. controls (.9,-1) and (1,-.5) .. (1,0);
        \draw[-] (-1,0) .. controls (-1,-.5) and (-.9,-2) .. (-.5,-2);
        \draw[-] (-.5,-2) .. controls (-.1,-2) and (1,-2) .. (1,0);
      \end{tikzpicture}
  \end{equation*}
We can now identify $\overline{(-p)p}$ with a closed interval $[-a,a]$, $a>0$ and the portions of $\partial B$ and $\partial(-B)$ depicted above with graphs of convex functions $f<0$ and $g<0$ respectively defined on $(-a,a)$. Moreover, the assumption $B\ne -B$ translates without loss of generality to $g(x)<f(x)$ for all $x\in (-a,a)$. 

In this setup, the meaning of the hypothesis is that whenever a line through the origin intersects the graphs of $f$ and $g$ at smooth points $(x,f(x))$ and $(y,f(y))$ respectively, the tangents to the two graphs at those points are parallel; equivalently, $f'(x)=g'(y)$. 

If, in the previous paragraph, we choose $x$ to be negative, then $y<x$. Because $g$ is convex, its derivative is non-decreasing, and hence
\begin{equation*}
  f'(x)=g'(y)\le g'(x)
\end{equation*}
 (always assuming we are working with points where the derivatives exist, which is the case for all but countably many of the elements in the domain $(-a,a)$).

In conclusion, whenever both $f$ and $g$ are differentiable at $x\in (-a,0]$ we have $f'(x)\le g'(x)$. This, together with
\begin{equation*}
  f(0)=\int_{-a}^0f'(x)\ \mathrm{d}x\le g(0)=\int_{-a}^0g'(x)\ \mathrm{d}x
\end{equation*}
contradicts our assumption that $g(0)<f(0)$. 
\end{proof}

Using \Cref{pr.2}, we obtain

\begin{corollary}\label{cor.iii_implies_sym}
  Let $B\subset \bR^n$, $n\ge 3$ be a convex body satisfying condition
  (iii) of \Cref{th.ell_bis}. Then, $B$ is centrally symmetric.  
\end{corollary}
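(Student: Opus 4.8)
The plan is to reduce the statement to the two-dimensional case already settled by \Cref{pr.2}, and then to lift the resulting conclusion back up to $\bR^n$ by means of \Cref{le.2impln}.

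First I would verify the base case, namely that the implication (iii) $\Rightarrow$ (centrally symmetric) holds in $\bR^2$. For a planar body $B$, condition (iii) requires every closed convex subset of $B$ to be parkable in $B$; by \Cref{re.hyp} this is equivalent to the weaker-looking demand that merely every hyperplane section of $B$ be parkable. Since a hyperplane in the plane is just a line, this is exactly the hypothesis of \Cref{pr.2}: every intersection of $B$ with a line is parkable in $B$. That proposition then delivers the central symmetry of the planar $B$, which is precisely the base case.

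With the implication secured in $\bR^2$, I would invoke \Cref{le.2impln} directly. That lemma asserts that once (iii) $\Rightarrow$ (centrally symmetric) is known in the plane, the same implication holds for every $\bR^n$ with $n\ge 3$. Applying it to the given body $B\subset \bR^n$ finishes the proof.

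I do not expect a genuine obstacle at this stage, since the substantive work has already been carried out upstream---in the derivative-monotonicity argument of \Cref{le.supports} underpinning \Cref{pr.2}, and in the projection-based reduction of \Cref{le.2impln}. The only point demanding a moment's care is the bookkeeping: one must check that condition (iii) in dimension two, after the simplification granted by \Cref{re.hyp}, matches verbatim the line-section hypothesis of \Cref{pr.2}, so that the two results dovetail---condition (iii) yields planar central symmetry, which in turn yields central symmetry in $\bR^n$.
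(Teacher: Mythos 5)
Your proposal is correct and follows exactly the paper's own route: the paper's proof reads, in its entirety, ``This follows immediately from \Cref{le.2impln} via \Cref{pr.2}.'' Your extra bookkeeping---noting that in $\bR^2$ hyperplane sections are line sections, so condition (iii) (via \Cref{re.hyp}, or simply because line sections are closed convex subsets) matches the hypothesis of \Cref{pr.2}---is precisely the implicit glue the paper leaves to the reader.
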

\begin{proof}
  This follows immediately from \Cref{le.2impln} via \Cref{pr.2}.
\end{proof}

We now know that all convex bodies satisfying the weakest condition (iii) of \Cref{th.ell_bis} are centered at the origin, and as a consequence we henceforth specialize the discussion to such bodies. Next, our goal will be to prove an $n=3$ version of \Cref{pr.aux}, announced above.

\begin{proposition}\label{pr.aux_bis}
  Let $B$ be a convex centrally symmetric body $B\subset \bR^3$ satisfying condition (iii) from \Cref{pr.ell}. Then, for every linear hyperplane $H\subset \bR^3$, the non-empty intersections
  \begin{equation*}
    (H+x)\cap B,\quad x\in \bR^3
  \end{equation*}
have centers of symmetry. Moreover, these centers are collinear.
\end{proposition}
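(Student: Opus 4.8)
The plan is to fix coordinates $\bR^3=H\oplus\bR n$ with $n$ a unit normal to the linear hyperplane $H$, and to parametrize the sections by height, writing $S_t=(H+tn)\cap B$ and identifying $S_t$ with a planar convex body $B_t\subseteq H$ via orthogonal projection along $n$. Central symmetry of $B$ (already available from \Cref{cor.iii_implies_sym}) gives $B_{-t}=-B_t$, so the middle section $B_0=H\cap B$ is centrally symmetric about the origin. The first genuine step is a reduction to the single plane $H$: by \Cref{re.hyp} each $S_t$ is parkable in $B$, and since $S_t$ is flat and horizontal, any parking translate that is to contain $0$ must itself lie in the horizontal plane $H$ through $0$. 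Consequently parkability of $S_t$ is equivalent to the existence of a planar translate of $B_t$ contained in $B_0$ and containing the origin; the whole problem is thereby transported into the centrally symmetric plane body $B_0\subseteq H$ together with the family $\{B_t\}$ parking into it.

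With this reduction in hand, the heart of the matter is to show that each $B_t$ has a center of symmetry, and here I would appeal to the planar \Cref{le.supports}. For a fixed $t$ let $c_t\in B_t$ be the point carried to the origin by a parking translation $\tau$, so that $B_t+\tau\subseteq B_0$ and $c_t+\tau=0$. I would then verify that $c_t$ plays the role demanded by \Cref{le.supports}: for every direction $v\in H$, if $L_1,L_2$ are the two supporting lines of $B_t$ orthogonal to $v$, then $c_t\in\co\big((L_1\cap B_t)\cup(L_2\cap B_t)\big)$. This incidence is produced exactly as in the proof of \Cref{pr.2}, by parking the chord joining a contact point on $L_1$ to one on $L_2$: the parked chord lands in $B_0$ through the origin, and transporting it back up to height $t$ places $c_t$ on the original chord. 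Feeding this support condition into \Cref{le.supports}, applied to $B_t-c_t$, yields that $B_t$ is centrally symmetric about $c_t$. I expect the main obstacle to be precisely the bookkeeping that makes a \emph{single} point $c_t$ work simultaneously for \emph{all} directions $v$: a chord is a one-dimensional set and can be parked into $B_0$ by many different translations, each producing a possibly different point of the chord, so the point it certifies need not be $c_t$. Reconciling these, i.e. forcing the contact chords to be compatible with the one translation $\tau$ that parks the whole section, is where the rigidity of the parking direction recorded in \Cref{le.unique} and \Cref{re.unique} must be brought to bear, and this is the crux of the whole proposition.

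It remains to prove that the centers are collinear. At this point we have points $c_t\in H$ with $c_0=0$ and, from $B_{-t}=-B_t$, the oddness $c_{-t}=-c_t$; the goal is to show that $t\mapsto c_t$ is affine, so that the section centers $c_t+tn$ sweep out a single line through the origin. The plan is to establish that the section-wise reflections $y\mapsto 2(c_t+tn)-y$ are the restrictions of one globally defined linear involution of $\bR^3$ preserving $B$, namely a reflection-type involution that carries each horizontal section to itself. Convexity of $B$ already forces the points $c_t+tn$ to lie inside $B$ along a curve whose chords stay in $B$; combined with the oddness $c_{-t}=-c_t$ and the uniqueness of the parking position established in the central-symmetry step, this should pin $c_t$ down to a linear function of $t$. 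I regard this last step as the lighter part of the argument, the substantive work being concentrated in the planar symmetry argument above; should a three-section convexity estimate prove insufficient on its own, the fallback is to re-run the uniqueness-of-parking analysis across a continuous range of heights and deduce affineness from the continuity and additivity of the resulting translations.
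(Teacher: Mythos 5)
Your reduction to the plane $H$ and the plan to feed a common point into \Cref{le.supports} do match the paper's strategy in outline, but your proposal breaks down exactly at the spot you yourself label the ``crux,'' and the tools you point to cannot repair it. First, your point $c_t$ is not well-defined: parking translations of a section are far from unique (for the round ball, a section at height $t\neq 0$ can be parked with its center anywhere in a small disk), so ``the point carried to the origin by a parking translation'' depends on the chosen parking; the hoped-for ``uniqueness of the parking position'' is simply false. Nor do \Cref{le.unique} and \Cref{re.unique} help: they concern translates $C_u$ with $u$ on a large sphere, parked inside $\co(-C_u\bigcup C_u)$, and they assert uniqueness only of the \emph{direction} of the parking vector in that tight configuration; they say nothing about parking sections inside a fixed body $B$. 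Second, the chord-parking step does not deliver what the analogous step in \Cref{pr.2} delivers. There, the parked chord still spans the slab between the two supporting lines of the \emph{same} body, which forces its endpoints back onto $L_1$ and $L_2$ and hence places the origin in the convex hull of the contact sets. Here a chord of $B_t$ is parked into the \emph{different} body $B_0$, whose width in the direction $v$ is in general strictly larger, so the parked chord need not touch any supporting line of $B_0$; all you learn is that some point of the chord translates to the origin, which constrains neither $c_t$ nor the symmetry of $B_t$.

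The paper closes this gap by a mechanism entirely absent from your proposal: it first proves the Blaschke property (\Cref{pr.bla}, via the multivalued Borsuk--Ulam argument of \Cref{le.psi_onto}), restricts to a dense set of planes $H$ whose two parallel supporting planes touch $B$ at single points $p$ and $q=-p$ (the statement being closed in $\cG(3,2)$, this suffices), and then \emph{names} the candidate center of $K=(H+x)\cap B$ in advance as $r=(H+x)\cap\overline{pq}$. For any direction $L\subset H$, the Blaschke plane $H'$ associated to $L$ must pass through $p$ and $q$, hence through $r$, and the segment $H'\cap K$ has its endpoints on the contact sets $L_1\cap K$ and $L_2\cap K$; this is what makes a \emph{single} point work for all directions simultaneously, and it gives collinearity for free, since every center lies on $\overline{pq}$. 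Note finally that your last step is not ``light'': for a centrally symmetric convex body all of whose sections parallel to $H$ have centers, those centers need \emph{not} be collinear. For instance, the union of the horizontal disks of radius $1-t^2$ centered at $(\delta t^3/2,0,t)$, $|t|\le 1/2$, capped by flat disks at $t=\pm 1/2$, is convex for small $\delta>0$ and centrally symmetric, yet its section centers are non-collinear. So convexity plus the oddness $c_{-t}=-c_t$ cannot pin $c_t$ down to a linear function of $t$; parkability must enter again, and in the paper it does so through the identification of the centers with points of $\overline{pq}$.
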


Before embarking on a proof, we need some preparation. As explained in the discussion following \cite[Question 31]{berg}, convex bodies $B$ with parkable hyperplane sections have the following property: for any linear hyperplane $H\subset \bR^3$ there is a line $L\subset \bR^3$ such that $B$ has a supporting translate of $L$ at every point of $H\cap \partial B$ (here, `supporting' is the natural extension to lines of the term `supporting hyperplane'; it simply means that the line intersects only the boundary of $B$).

It is also explained in loc. cit. how this property is in a sense dual to one considered by Blaschke \cite[pp. 157-159]{bla} in relation to illuminating convex bodies. In this latter reference, (smooth) convex bodies are considered with the property that when illuminated with parallel light rays, the boundary curve of the illuminated region is planar. In other words, given a line $L$, the intersections of the $B$-supporting translates of $L$ with $B$ form a planar curve in $\partial B$. This is the precise opposite of the situation in the previous paragraph.

Motivated by these considerations, we label the two dual properties described above.

\begin{definition}\label{def.bla}
  Let $B\subset \bR^3$ be a convex body containing $0$ in its interior. 

$B$ has {\it the Blaschke property} (or {\it is Blaschke}, for short) if for any line $L$ there exists a linear hyperplane $H\subset \bR^3$ such that all points in $H\cap \partial B$ lie on some $B$-supporting translate of $L$. 

$B$ has {\it the dual Blaschke property} (or {\it is dual Blaschke}) if for any linear hyperplane $H\subset \bR^3$ there is a line $L\in \bR^3$ such that all points in $H\cap \partial B$ lie on some $B$-supporting translate of $L$.  
\end{definition}

With these terms in place, the discussion preceding the definition amounts to (\cite[p. 257]{berg})

\begin{proposition}\label{pr.co_bla}
  A centrally symmetric convex body in $\bR^3$ satisfying condition (iii) of \Cref{th.ell_bis} is dual Blaschke. 
\qedhere
\end{proposition}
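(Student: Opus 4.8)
The plan is to reduce, via \Cref{re.hyp}, to the hypothesis that every hyperplane section of the body $B$ is parkable (recall that $B$ is already centrally symmetric by \Cref{cor.iii_implies_sym}), and then, for a fixed linear hyperplane $H$, to exhibit an explicit line direction witnessing the dual Blaschke property of \Cref{def.bla}. Fix a unit normal $\nu$ to $H$ and write points of $\bR^3$ as $h+s\nu$ with $h\in H$. For each height $s$ let $B_s=\{h\in H\ :\ h+s\nu\in B\}\subseteq H$ be the corresponding section, so that $B_0=H\cap B$ is the central one and $H\cap\partial B=\partial B_0$. Any translate of the height-$s$ section containing $0$ is forced to lie in the hyperplane through the origin, namely $H$, and hence inside $B_0$; therefore parkability of the section at height $s$ says precisely that the convex set $V_s:=\{h\in H\ :\ B_s+h\subseteq B_0\}$ is non-empty. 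Central symmetry $B_{-s}=-B_s$ moreover gives $V_{-s}=-V_s$.

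The crux is to upgrade this pointwise parkability to a single transverse direction. Concretely, I want a vector $\tau\in H$ with $-s\tau\in V_s$ for every $s$, i.e. $B_s-s\tau\subseteq B_0$ for all $s$; equivalently, setting $w:=\nu+\tau$, the projection of $B$ onto $H$ along $w$ is exactly $B_0$, so that $B$ is contained in the cylinder $C:=B_0+\bR w$. Writing $W_s:=-\tfrac1s V_s$ for $s\neq0$, these are non-empty compact convex subsets of the plane $H\cong\bR^2$ satisfying $W_s=W_{-s}$, and any admissible $\tau$ is a point of $\bigcap_s W_s$. I would produce such a $\tau$ by a Helly/compactness argument in $H$: by Helly's theorem in the plane it suffices to verify that every three of the sets $W_s$ have a common point, which I would extract from the concavity of the section-valued map $s\mapsto B_s$ (superadditivity of sections under Minkowski sums) together with the symmetry $W_s=W_{-s}$. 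This passage — from a parking translate for each \emph{individual} section to a common, affinely varying family of translates realizing one honest cylinder — is the main obstacle, and it is where both central symmetry and the convex interpolation of sections are genuinely used. I emphasize that parkability is indispensable at exactly this point: central symmetry alone only forces $B_0$ to have maximal width in every direction (by concavity of $s\mapsto \|\cdot\|$-type support functions of the sections), which by itself does \emph{not} permit a section to be slid inside $B_0$, so the non-emptiness of the $V_s$ is a real input.

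Granting the cylinder containment $B\subseteq C=B_0+\bR w$, the dual Blaschke property for $H$ follows cleanly with $L=w$. Indeed, for any $x\in H\cap\partial B=\partial B_0$ the entire line $x+\bR w$ lies in the lateral boundary $\partial C=\partial B_0+\bR w$; since $B\subseteq C$ forces $\mathrm{int}\,B\subseteq\mathrm{int}\,C$, any point of $B$ lying on this line lies on $\partial C$ and so cannot be interior to $B$. Hence $x+\bR w$ meets $B$ only along $\partial B$, i.e. it is a $B$-supporting translate of $L$ passing through $x$. As $x$ ranges over all of $H\cap\partial B$, this produces exactly the line $L$ demanded by \Cref{def.bla}, so $B$ is dual Blaschke.
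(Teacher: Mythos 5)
Your proof is essentially correct, but it is worth saying up front that it takes a route the paper does not take at all: the paper offers \emph{no} argument for \Cref{pr.co_bla} --- the result is imported wholesale from Bergman's paper (the citation \cite[p.~257]{berg}, whence the proof-box closing the statement itself), with the surrounding discussion merely recasting Bergman's observation in the language of \Cref{def.bla}. So what you have written is a genuinely self-contained proof of a statement the paper only quotes, and the one step you left as a sketch does go through --- in fact more simply than you propose. The key point is that your sets $W_s$ are \emph{nested}: for $0<s'<s$ one has $W_{s'}\subseteq W_s$. Indeed, suppose $B_{s'}+h\subseteq B_0$ with $h=-s'\tau$, and put $\lambda=s'/s\in(0,1)$. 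Convexity of $B$ (Brunn's concavity of parallel sections, exactly the superadditivity you invoke) gives $\lambda B_s+(1-\lambda)B_0\subseteq B_{s'}$, so for each fixed $x\in B_s$ the affine contraction $y\mapsto \lambda x+h+(1-\lambda)y$ maps the compact set $B_0$ into itself; its unique fixed point is $x+h/\lambda=x-s\tau$, which therefore lies in $B_0$. Hence $B_s-s\tau\subseteq B_0$, i.e.\ $\tau\in W_s$. Combined with your symmetry $W_s=W_{-s}$, the whole family $\{W_s\}$ is totally ordered by inclusion when indexed by $|s|$, each member is non-empty (parkability) and compact (each $V_s=\bigcap_{b\in B_s}(B_0-b)$ is a closed bounded erosion of $B_0$), so the full intersection is non-empty by the finite intersection property for nested compacta; Helly's theorem is never needed, and in particular your triple-intersection claim is true for the trivial reason that among any three of the sets the smallest contains a point. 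The remaining pieces of your argument are sound as written: the reduction via \Cref{re.hyp}, the observation that a parking translate of the height-$s$ section must lie in $H$ itself, and the final passage from the cylinder containment $B\subseteq B_0+\bR w$ to the supporting lines demanded by \Cref{def.bla} (using $\partial(B_0+\bR w)=(H\cap\partial B)+\bR w$ and $\mathrm{int}\,B\subseteq\mathrm{int}\,(B_0+\bR w)$). One small imprecision: parkability of the height-$s$ section is not \emph{precisely} the non-emptiness of $V_s$ --- it further requires the translate to contain $0$, i.e.\ $V_s\cap(-B_s)\neq\emptyset$ --- but you only ever use the weaker consequence $V_s\neq\emptyset$, so nothing is harmed.
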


Our next aim is to prove the dual version of this result.

\begin{proposition}\label{pr.bla}
  A centrally symmetric convex body in $\bR^3$ satisfying condition (iii) of \Cref{th.ell_bis} is Blaschke.   
\end{proposition}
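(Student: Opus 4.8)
The plan is to recast the dual Blaschke property (\Cref{pr.co_bla}) and the Blaschke property to be proved as two complementary surjectivity statements about a single incidence relation, and to pass from the first to the second by showing that the relation is the graph of a continuous injection between two copies of the projective plane. For a line direction $L$, write $\Sigma(L)\subseteq\partial B$ for the set of points lying on some $B$-supporting translate of $L$ (the sharp shadow boundary, or terminator, of $B$ in the direction of $L$), and set
\[
  R=\{(H,L)\ :\ H\cap\partial B\subseteq \Sigma(L)\}\ \subseteq\ \cG(3,2)\times \cG(3,1),
\]
where $\cG(3,2)$ is the space of linear (central) planes and $\cG(3,1)$ the space of line directions, both homeomorphic to $\mathbb{RP}^2$. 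With this notation the dual Blaschke property says exactly that $R$ projects onto $\cG(3,2)$, while the Blaschke property asserts that $R$ projects onto $\cG(3,1)$. Central symmetry of $B$ guarantees that $\Sigma(L)$ and every section $H\cap\partial B$ are centrally symmetric, keeping everything compatible with these identifications.

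First I would show that for each central plane $H$ there is a \emph{unique} direction $L=\beta(H)$ with $(H,L)\in R$, so that $R$ is the graph of a map $\beta\colon\cG(3,2)\to\cG(3,1)$. A point $x\in\partial B$ lies on a supporting translate of $L$ precisely when some supporting hyperplane of $B$ at $x$ contains the direction of $L$, i.e. when the normal cone of $B$ at $x$ meets $L^\perp$ (the plane of vectors orthogonal to $L$). If $H\cap\partial B\subseteq\Sigma(L)\cap\Sigma(L')$ with $L\neq L'$, then at each point $x\in H\cap\partial B$ where $\partial B$ is differentiable the unique outer normal $\nu(x)$ lies in $L^\perp\cap L'^\perp$, a single line; but the outer normals along the convex section curve $H\cap\partial B$ cannot all be parallel (otherwise the section would be degenerate), a contradiction. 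The same identification forces $H\cap\partial B=\Sigma(\beta(H))$ (a closed convex curve contained in the closed curve $\Sigma(L)$ exhausts it), whence $\beta$ is injective: $\beta(H)=\beta(H')$ gives $H\cap\partial B=H'\cap\partial B$ and therefore $H=H'$. Continuity of $\beta$ I would obtain exactly as in \Cref{le.cont}: the relation $R$ is closed in the product of the compact spaces $\cG(3,2)$ and $\cG(3,1)$, and a single-valued relation with closed graph between compact Hausdorff spaces is the graph of a continuous map.

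With $\beta\colon\mathbb{RP}^2\to\mathbb{RP}^2$ continuous and injective in hand, invariance of domain shows that $\beta$ is open, so $\beta(\cG(3,2))$ is open in $\cG(3,1)$; being the continuous image of a compact space it is also closed, and $\cG(3,1)\cong\mathbb{RP}^2$ is connected, so $\beta$ is onto. (Equivalently one may pass to the double covers and conclude by a degree computation in the spirit of \Cref{cor.onto}.) Surjectivity of $\beta$ says precisely that every direction $L$ equals $\beta(H)$ for some central plane $H$, that is, $H\cap\partial B\subseteq\Sigma(L)$ — which is the Blaschke property.

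The hard part will be the single-valuedness of $\beta$: the normal-cone argument is clean only where $\partial B$ is differentiable, and a priori the non-smooth boundary points of a convex surface in $\bR^3$ can fill out an entire edge, so I must exclude the possibility that a whole section $H\cap\partial B$ is a crease lying on supporting translates of a one-parameter family of lines (and, dually, that $\Sigma(L)$ is two-dimensional). I would handle this by first establishing that a centrally symmetric body with the dual Blaschke property is smooth and strictly convex — an edge or a flat facet of $\partial B$ being incompatible with every central section arising as a sharp terminator $\Sigma(L)$ — after which the argument above applies verbatim. As a conceptual check that this is the right reduction, note that polar duality interchanges central sections and shadow boundaries, hence exchanges the two properties, so that $B$ is Blaschke if and only if $B^\circ$ is dual Blaschke; the bijection $\beta$ is the concrete incarnation of this duality.
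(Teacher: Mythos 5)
Your overall strategy --- encode both properties as surjectivity of the two projections of an incidence relation $R\subseteq \cG(3,2)\times\cG(3,1)$, and pass from surjectivity onto $\cG(3,2)$ (dual Blaschke, \Cref{pr.co_bla}) to surjectivity onto $\cG(3,1)$ (Blaschke) by topological means --- has the same skeleton as the paper's proof, which likewise converts \Cref{pr.co_bla} into a surjectivity statement for a direction-assigning map and concludes via Borsuk--Ulam. The decisive difference is that the paper never claims the assignment is single-valued: it works with the set-valued map $\psi_B(v)$ of \emph{all} admissible directions, checks only that its values are non-empty, closed and convex, and that it is odd and upper semicontinuous (\Cref{le.odd}), and then invokes a \emph{multivalued} Borsuk--Ulam theorem (\Cref{le.psi_onto}). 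Your proof instead needs $R$ to be the graph of an honest map $\beta$, so that invariance of domain (or the degree argument in the spirit of \Cref{cor.onto}) can be applied.

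That is exactly where the gap is, and you have located it yourself: single-valuedness of $\beta$ is argued only at points where $\partial B$ is differentiable, and since $H\cap\partial B$ has measure zero in $\partial B$, the almost-everywhere smoothness of convex surfaces gives you nothing --- an entire central section could consist of singular points, and dually a shadow boundary $\Sigma(L)$ could be two-dimensional. Your proposed repair, namely ``first establish that a centrally symmetric dual Blaschke body is smooth and strictly convex,'' is asserted in one clause and never proved; it is a substantive geometric claim about a body of which, at this stage of the argument, nothing else is known (smoothness and strict convexity become available only a posteriori, once $B$ is known to be an ellipsoid). Nothing in the paper supplies such a lemma, and it is not visibly easier than \Cref{pr.bla} itself: ruling out, say, a crease of singular points forming a whole central section that lies on supporting translates of a pencil of directions requires a genuine argument, not the remark that an edge is ``incompatible with a sharp terminator.'' The paper's recourse to a multivalued Borsuk--Ulam theorem exists precisely so that no such regularity statement ever has to be proved. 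As it stands your proof is therefore incomplete: either you must actually prove the regularity lemma, or you must replace the single-valued $\beta$ by the set-valued $\psi_B$ --- at which point you have reproduced the paper's argument.
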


We now introduce some tools necessary in the proof of \Cref{pr.bla}. Consider a convex body $B$ as in \Cref{pr.co_bla}. For every unit vector $v\in \bR^3$, denote by $H_v$ the plane orthogonal to $v$. We define the subset $\psi(v)=\psi_B(v)$ of the unit sphere to consist of all unit vectors $w$ such that
\begin{equation*}
  w\cdot v>0 \text{ and the line } \{p+tw\ |\ t\in \bR\} \text{ is }B-\text{supporting for every }p\in \partial B\cap H_v. 
\end{equation*}
In other words, it is the set of unit vectors that make acute angles with $v$ and pointing along the directions of those lines $L$ associated to the plane $H_v$ as in \Cref{pr.co_bla,def.bla}. 

\Cref{pr.co_bla} is what makes $\psi(v)$ non-empty. Note moreover that since the lines supporting $B$ at points in $\partial B\cap H$ cannot be contained in $H$ (for any plane $H$), $\psi(v)$ is closed in the unit sphere $\bS^2$. Finally, it is also {\it convex} as a subset of the sphere, in the sense that for $w,w'\in \psi(v)$ the rescaled convex combinations
\begin{equation*}
  \frac{tw+(1-t)w'}{\|tw+(1-t)w'\|},\ t\in (0,1)
\end{equation*}
all belong to $\psi(v)$. All in all, we have defined a map 
\begin{equation*}
  \psi=\psi_B:\bS^2\to\text{ closed convex subsets of }\bS^2. 
\end{equation*}
The map $\psi$ has a number of other properties that are easy to check:

\begin{lemma}\label{le.odd}
  For a convex body $B$ as in \Cref{pr.co_bla}, the map $\psi=\psi_B$ is odd in the sense that 
  \begin{equation*}
    \psi(-v)=-\psi(v)
  \end{equation*}
and upper semicontinuous in the sense that its graph
\begin{equation*}
  \{(v,w)\in \bS^2\times \bS^2\ |\ w\in \psi(v)\}
\end{equation*}
is closed in $\bS^2\times \bS^2$. 
\end{lemma}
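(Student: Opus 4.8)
The plan is to handle the two assertions separately, since oddness is a purely formal consequence of the definition while upper semicontinuity requires a genuine limiting argument.

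For oddness I would simply unwind the definition of $\psi$. The two ingredients are that the orthogonal plane depends on $v$ only up to sign, so $H_{-v}=H_v$, and that an (unoriented) line $\{p+tw\mid t\in\bR\}$ is unchanged under $w\mapsto -w$; hence the supporting condition in the definition of $\psi(v)$ is insensitive to negating the direction $w$. The only clause distinguishing $w$ from $-w$ is the open half-space condition $w\cdot v>0$. Tracking signs, $z\in-\psi(v)$ means $-z\in\psi(v)$, i.e. $z\cdot v<0$ together with the requirement that every line $p+\bR z$ with $p\in\partial B\cap H_v$ be $B$-supporting; this is exactly the condition defining $z\in\psi(-v)$, so $\psi(-v)=-\psi(v)$. (Central symmetry of $B$ plays no role in this part.)

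For upper semicontinuity I would take a sequence $(v_n,w_n)\to(v,w)$ with $w_n\in\psi(v_n)$ and show $w\in\psi(v)$. Passing the clause $w_n\cdot v_n>0$ to the limit yields only $w\cdot v\ge 0$, so two things must be secured: the supporting condition for the limit direction, and the strict inequality. The basic tool is the radial description of the slices: because $0\in\mathrm{int}(B)$ one has $\mathrm{relint}(B\cap H_v)=\mathrm{int}(B)\cap H_v$, so $\partial B\cap H_v$ is precisely the radial graph $\{\rho_B(\theta)\theta\mid \theta\in H_v\cap\bS^2\}$, with $\rho_B$ the continuous radial function of $B$. Now fix any $p\in\partial B\cap H_v$, necessarily nonzero. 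I would project $p$ into $H_{v_n}$ and normalize to obtain unit vectors $\hat u_n\to p/\|p\|$, then set $p_n=\rho_B(\hat u_n)\hat u_n\in\partial B\cap H_{v_n}$, so that $p_n\to p$ by continuity of $\rho_B$. Since $w_n\in\psi(v_n)$, each line $\ell_n=p_n+\bR w_n$ avoids $\mathrm{int}(B)$; as $\ell_n\to\ell=p+\bR w$ and $\mathrm{int}(B)$ is open, a point of $\ell\cap\mathrm{int}(B)$ would be the limit of points $p_n+t_0w_n\in\mathrm{int}(B)$ for large $n$, a contradiction. Thus every such $\ell$ is $B$-supporting. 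I expect this approximation to be the main obstacle: one must manufacture honest boundary points of the perturbed slices converging to a prescribed boundary point of the limit slice, which is exactly where continuity of $\rho_B$ and $0\in\mathrm{int}(B)$ enter.

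It remains to upgrade $w\cdot v\ge 0$ to $w\cdot v>0$. If instead $w\cdot v=0$, then $w$ lies in the plane $H_v$, so every line $p+\bR w$ with $p\in\partial B\cap H_v$ lies inside $H_v$. A fixed in-plane direction $w$ supports the planar convex body $B\cap H_v$ only along its two supporting lines of direction $w$, which do not exhaust the boundary curve $\partial B\cap H_v$; at any other boundary point $p$ the line $p+\bR w$ is a chord meeting $\mathrm{relint}(B\cap H_v)=\mathrm{int}(B)\cap H_v$, contradicting the supporting property just established. This is the same observation already invoked in the text to see that each $\psi(v)$ is closed in $\bS^2$. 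Hence $w\cdot v\ne 0$, so $w\cdot v>0$ and $w\in\psi(v)$, which shows the graph is closed.
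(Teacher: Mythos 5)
Your proposal is correct and follows essentially the same route as the paper: oddness is the same formal unwinding (using $H_{-v}=H_v$ and the insensitivity of unoriented lines to $w\mapsto -w$), and your semicontinuity argument is a fully detailed version of the paper's one-line appeal to the closedness of the set of $B$-supporting affine lines, with the radial-function approximation of boundary points and the in-plane-direction observation (which the paper also uses, to see each $\psi(v)$ is closed) supplying the details the paper declares routine. No gaps; the extra care with the strict inequality $w\cdot v>0$ and with manufacturing $p_n\to p$ is exactly what a complete write-up of the paper's sketch requires.
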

\begin{proof}
  The property of being odd is immediate from the definition of $\psi$: $v$ and $-v$ define the same plane $H_v=H_{-v}$, and hence the same set of lines supporting $B$ at the points of $\partial B\cap H_v$. The upper semicontinuity claim is similarly routine, using the observation that the set of $B$-supporting affine lines is closed in the set of all affine lines of $\bR^3$. 
\end{proof}

\begin{lemma}\label{le.psi_onto}
  An odd, upper semicontinuous map 
  \begin{equation*}
       \psi=:\bS^2\to\text{ closed convex subsets of }\bS^2
  \end{equation*}
is onto in the sense that every $w\in \bS^2$ belongs to some set $\psi(v)$, $v\in \bS^2$. 
\end{lemma}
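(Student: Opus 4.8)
The plan is to argue by contradiction, manufacturing from a hypothetical uncovered point a continuous odd self-map of $\bS^2$ that misses two antipodal values, and then contradicting the Borsuk--Ulam theorem exactly as in \Cref{cor.onto}. Suppose some $w_0\in\bS^2$ belongs to no set $\psi(v)$. By oddness this is self-symmetric: $w_0\notin\psi(v)$ for all $v$ forces $-w_0\notin-\psi(v)=\psi(-v)$ for all $v$, so $-w_0$ is uncovered as well. Since the graph of $\psi$ is closed in the compact space $\bS^2\times\bS^2$, the function $v\mapsto\mathrm{dist}\big(w_0,\psi(v)\big)$ is lower semicontinuous and strictly positive, hence bounded below by some $\delta>0$, and likewise for $-w_0$. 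I would also record the structural fact that a nonempty closed spherically convex subset of $\bS^2$ containing no antipodal pair lies in an open hemisphere; consequently each convex hull $\co(\psi(v))\subset\bR^3$ is a compact convex set avoiding the origin, and the set of directions of its points is exactly the spherical convex hull of $\psi(v)$, namely $\psi(v)$ itself.

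Next I would replace the set-valued $\psi$ by an honest continuous map. The hull map $v\mapsto\co(\psi(v))$ is upper semicontinuous with nonempty compact convex values in $\bR^3$, so Cellina's approximate-selection theorem yields, for every $\varepsilon>0$, a continuous $f\colon\bS^2\to\bR^3$ whose graph lies in an $\varepsilon$-neighbourhood of that of $\co(\psi)$; by upper semicontinuity this forces $f(v)$ to lie within distance $\eta$ of $\co(\psi(v))$, with $\eta=\eta(\varepsilon)\to0$. To restore oddness I set $g(v)=\tfrac12\big(f(v)-f(-v)\big)$: since $-f(-v)$ lies within $\eta$ of $-\co(\psi(-v))=\co(\psi(v))$ and $\co(\psi(v))$ is convex, $g(v)$ again lies within $\eta$ of $\co(\psi(v))$, while $g$ is continuous and odd. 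As $\co(\psi(v))$ is uniformly bounded away from the origin, say by $\delta'>0$ (again by lower semicontinuity of the distance over the compact base), we get $g(v)\neq0$ once $\eta<\delta'$, so $G(v)=g(v)/\|g(v)\|$ defines a continuous odd map $\bS^2\to\bS^2$.

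It remains to check that $G$ misses $\pm w_0$. Because $g(v)$ is within $\eta$ of $\co(\psi(v))$ yet at distance at least $\delta'-\eta$ from the origin, its direction $G(v)$ is within an angle $\rho(\eta)\to0$ of the direction set of $\co(\psi(v))$, which by the structural fact above equals $\psi(v)$. Since $\psi(v)$ is $\delta$-far from both $w_0$ and $-w_0$, choosing $\varepsilon$ (hence $\eta$ and $\rho$) small enough guarantees $G(\bS^2)\subseteq\bS^2\setminus\{w_0,-w_0\}$. Composing $G$ with the continuous odd retraction $r(x)=\dfrac{x-(x\cdot w_0)w_0}{\|x-(x\cdot w_0)w_0\|}$ onto the equator $E=\{y\in\bS^2\mid y\cdot w_0=0\}\cong\bS^1$ produces a continuous odd map $\bS^2\to\bS^1$, which is impossible by Borsuk--Ulam (\cite{dold}). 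This contradiction shows that $\psi$ is onto.

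The main obstacle I anticipate is the convexity bookkeeping underlying the reduction: one must verify that the values genuinely sit in open hemispheres (so that the Euclidean convex hull avoids $0$ and records $\psi(v)$ faithfully through its directions) and that the symmetrized approximate selection stays close to $\psi(v)$ while its normalization avoids $\pm w_0$ uniformly in $v$. Should these estimates prove delicate, an alternative is to invoke a Borsuk--Ulam theorem for upper semicontinuous convex-valued maps directly, bypassing the explicit approximate selection.
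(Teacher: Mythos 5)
Your proof is correct, but it takes a genuinely different route from the paper's. The paper's argument is three lines: assuming $w$ is uncovered (hence, by oddness, so is $-w$), it retracts the set-valued map \emph{itself} along great circles through $w$ onto the equator in $w^\perp$, obtaining an odd, upper semicontinuous map from $\bS^2$ to closed convex subsets of $\bS^1$, and then cites a multivalued Borsuk--Ulam theorem (\cite[Corollary 2.4]{multi_BU}) to rule such a map out. You instead first convert the set-valued map into an honest continuous odd map: Cellina's approximate-selection theorem applied to $v\mapsto\co(\psi(v))$, followed by the symmetrization $g(v)=\tfrac12\bigl(f(v)-f(-v)\bigr)$ (which stays close to $\co(\psi(v))$ precisely because that set is convex and $\psi$ is odd), normalization, and only then the equatorial retraction --- after which the classical single-valued Borsuk--Ulam of \cite{dold} suffices, exactly as in \Cref{cor.onto}. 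In effect you have inlined a proof of the special case of the multivalued theorem that the paper cites; indeed, the alternative you mention in your last sentence (invoking a Borsuk--Ulam theorem for u.s.c.\ convex-valued maps directly) \emph{is} the paper's proof. What your route buys is self-containedness: no new citation beyond Cellina and the Borsuk--Ulam theorem already used elsewhere in the paper. The cost is the convexity bookkeeping you flag yourself: everything hinges on the values containing no antipodal pair, so that the cone over $\psi(v)$ is pointed, $\co(\psi(v))$ misses the origin, and the radial projection of $\co(\psi(v))$ recovers $\psi(v)$. That hypothesis is available here --- the paper's notion of spherical convexity via rescaled convex combinations is only meaningful for antipode-free sets, and in the intended application the values $\psi_B(v)$ lie in the open hemisphere $\{w\ |\ w\cdot v>0\}$ --- but it deserves to be stated explicitly, since the abstract lemma as written does not announce it. With that point made explicit, your estimates (uniform positive distance to $\pm w_0$ and to the origin via lower semicontinuity of the distance functions and compactness of $\bS^2$, and the angle bound $\rho(\eta)\to 0$) all go through.
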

\begin{proof}
  Assume the contrary, with, say, $w\in \bS^2$ lying outside all of the sets $\psi(v)$ as $v$ ranges over $\bS^2$. Flowing along great circles through $w$ down to the equator in the plane orthogonal to $w$, we can deform $\psi$ into an odd, upper semicontinuous map
  \begin{equation*}
    \psi=:\bS^2\to\text{ closed convex subsets of }\bS^1. 
  \end{equation*}
The existence of such a map contradicts the multivalued Borsuk-Ulam theorem as stated e.g. in \cite[Corollary 2.4]{multi_BU}.
\end{proof}

We have now effectively proven \Cref{pr.bla} above.

\begin{proof_of_bla}
Using the notation introduced above, the statement amounts to showing that $\psi_B$ is onto; this is precisely what \Cref{le.psi_onto} does. 
\end{proof_of_bla}

This provides us with the necessary tools to attack \Cref{pr.aux_bis}.

\begin{proof_of_aux_bis}
  The statement stipulates a condition that is closed among planes in $\cG(3,2)$, so it suffices to prove that this condition holds for a dense set of planes. Specifically, we will prove it for planes $H$ with the following property:
 
{\it The two supporting planes of $B$ that are parallel to $H$ each intersect $B$ at a single point.}

Now fix such a plane $H$, and let $p$ and $q=-p$ be the two points where the two $B$-supporting translates of $H$ intersect $B$. Let also $K$ be a planar section $(H+x)\cap B$ with non-empty relative interior. Finally, fix a line $L\in H$. 

In the affine plane $H+x$, the convex body $K$ has two supporting lines parallel to $L$; denote these by $L_1$ and $L_2$.

{\bf Claim: The convex hull of $L_i\cap K$ contains the point $r=(H+x)\cap \overline{pq}$.} Given the claim, we can finish the proof of the proposition as follows. 

Since the claim holds for any line $L\in H$, we can apply \Cref{le.supports} inside the affine plane $H+x$, with the point $r$ regarded as the origin, to conclude that $K$ has this point as its center of symmetry. In conclusion, we obtained the desired result: all planar sections of $B$ that are parallel to $H$ are centrally symmetric along points on the line containing $p$ and $q$.

It thus remains to prove the claim; this goal will take up the rest of the proof.

According to \Cref{pr.bla}, there is a plane $H'\subset \bR^3$ with the property that every point in $H'\cap \partial B$ is contained in a $B$-supported line parallel to $L$. By our choice of $H$ (so that its two $B$-supporting translates meet $B$ at single points $p$ and $q$ respectively), we have $p,q\in H'$. As a consequence of this, the endpoints of the segment $H'\cap K$ are contained in the $B$-supporting translates $L_1$ and $L_2$; this concludes the proof. 
\end{proof_of_aux_bis}

\begin{remark}\label{re.32to31}
  Note that given the plane $H\subset \bR^3$, the line containing the centers of symmetry for the non-empty planar sections $(H+x)\cap B$ is uniquely determined; this provides a map from the Grassmannian of planes $\cG(3,2)$ to the Grassmannian $\cG(3,1)$ of lines in $\bR^3$. 
\end{remark}

We now take the first step towards constructing a map $\cG(3,1)\to \cG(3,2)$ in the opposite direction to that of \Cref{re.32to31}.

\begin{lemma}\label{le.aux_bis_dual}
  Suppose we are under the assumptions of \Cref{pr.aux_bis}, and $L$ is the line containing the centers of symmetry of $(H+x)\cap B$ for $x$ ranging over $\bR^3$. Then, the midpoint of every non-empty intersection $(L+y)\cap B$ belongs to $H$. 
\end{lemma}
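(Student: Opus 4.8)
The plan is to manufacture from the slice-symmetries of \Cref{pr.aux_bis} a single linear involution of $\bR^3$ --- an oblique reflection fixing $H$ pointwise and reversing $L$ --- that preserves $B$, and then to read off the conclusion from the fact that such a reflection acts on every chord parallel to $L$ as the reflection about the point where that chord crosses $H$.

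First I would record that $L$ is transverse to $H$, i.e.\ $L\not\subseteq H$. Since $B$ is a body, the slices $(H+x)\cap B$ are non-empty for $x$ ranging over a whole interval of translates, and each such slice contains its center of symmetry, which by \Cref{pr.aux_bis} lies on $L$; were $L$ contained in $H$ it would be disjoint from every plane $H+x$ with $x\notin H$, a contradiction. Hence $\bR^3=H\oplus L$, and $L$ meets each plane parallel to $H$ in a single point. Because the center of the slice through that point lies on $L$ and in that plane, it must equal the point $L\cap(H+x)$ itself; in particular $H\cap B$ is centered at $0$, so $L$ passes through the origin.

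Next I would assemble the map $\sigma\colon\bR^3\to\bR^3$ that reflects each plane-parallel slice about its center. Writing $\pi_H$ and $\pi_L$ for the projections associated to $\bR^3=H\oplus L$, the center of the slice containing $z$ is $\pi_L(z)$, so reflecting $z$ about it gives
\[
  \sigma(z)=2\pi_L(z)-z=\pi_L(z)-\pi_H(z);
\]
the collinearity of the centers through the origin supplied by \Cref{pr.aux_bis} is exactly what makes this formula linear. Each slice being symmetric about its center yields $\sigma(B)=B$, and combining this with the central symmetry $-B=B$ shows that the oblique reflection $\tau:=-\sigma=\pi_H-\pi_L$ again satisfies $\tau(B)=B$. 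By construction $\tau$ fixes $H$ pointwise and acts as $-1$ on $L$. I expect this to be the step carrying the weight of the argument, since everything hinges on correctly recognizing that the slice-symmetries and the central symmetry combine into a genuine linear reflection preserving $B$.

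Finally I would exploit $\tau$. A line $\ell=L+y$ parallel to $L$ meets $H$ in a single point $m$, and since $\tau(y)-y=-2\pi_L(y)\in L$ the map $\tau$ carries $\ell$ onto itself, fixing $m$ and reversing the $L$-direction; thus $\tau$ restricts on $\ell$ to the reflection about $m$. As $\tau(B)=B$, the chord $(L+y)\cap B=\ell\cap B$ is preserved by this reflection, and a segment invariant under reflection about $m$ has $m$ as its midpoint. Therefore the midpoint of $(L+y)\cap B$ equals $\ell\cap H\in H$, which is exactly the assertion.
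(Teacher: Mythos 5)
Your proof is correct and takes essentially the same route as the paper: your $\sigma=\pi_L-\pi_H$ is exactly the paper's linear involution $T$ (identity on $L$, $-1$ on $H$) that \Cref{pr.aux_bis} shows preserves $B$, and your $\tau=-\sigma$ is the paper's $-T$, which preserves $B$ by central symmetry and restricts on each chord $(L+y)\cap B$ to the point reflection about $(L+y)\cap H$. The paper states this in three sentences; you have merely spelled out the transversality $\bR^3=H\oplus L$ and the linearity of the slice-wise reflection, which the paper leaves implicit.
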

\begin{proof}
  \Cref{pr.aux_bis} can be restated as saying that the linear involution $T$ of $\bR^3$ that acts as the identity along $L$ and as $x\mapsto -x$ on $H$ preserves the convex body $B$. The involution $-T$ also preserves $B$, acts as the identity on $H$, and as $x\mapsto -x$ along $L$. In other words, $-T$ preserves $H$ and reverses every segment $(L+y)\cap B$; this implies the conclusion. 
\end{proof}

We can now prove the dual version of \Cref{pr.aux_bis}.

\begin{proposition}\label{pr.aux_bis_dual}
  Suppose we are under the hypotheses of \Cref{pr.aux_bis}. For any line $L\subset \bR^3$, the midpoints of the non-empty intersections $(L+y)\cap B$ are coplanar. 
\end{proposition}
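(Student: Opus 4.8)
The plan is to reduce the statement to the surjectivity of the map $\Phi\colon \cG(3,2)\to\cG(3,1)$ of \Cref{re.32to31}, which sends a plane $H$ to the line $L_H$ carrying the centers of symmetry of the sections $(H+x)\cap B$. Recall from the proof of \Cref{le.aux_bis_dual} that $L_H$ is the $(+1)$-eigenline of a \emph{linear} involution $T_H$ preserving $B$ whose $(-1)$-eigenspace is $H$; linearity holds because the central section $H\cap B$ is symmetric about $0$, so $0\in L_H$. In that language \Cref{le.aux_bis_dual} asserts exactly that the midpoints of $(L_H+y)\cap B$ all lie in $H$, hence are coplanar. Since only the direction of $L$ enters the statement of the proposition, it therefore suffices to prove that every line through the origin arises as some $L_H$, i.e. that $\Phi$ is onto.

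Both $\cG(3,2)$ and $\cG(3,1)$ are homeomorphic to $\bR P^2$, so I would obtain surjectivity by showing $\Phi$ is continuous and injective: invariance of domain then makes $\Phi$ an open map, its image is open and, by compactness, closed, and connectedness of $\bR P^2$ forces it to be all of $\cG(3,1)$. Continuity is routine, the sections of $B$ by planes parallel to $H$ varying continuously with $H$ in the Hausdorff metric, so that their centers of symmetry, and hence the line $L_H$ they span, vary continuously. The crux will be injectivity. Suppose $\Phi(H_1)=\Phi(H_2)=L$, producing involutions $T_1,T_2$ preserving $B$, each fixing $L$ pointwise and acting as $-\id$ on its complementary plane $H_i$. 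Each $T_i$ descends to $-\id$ on $\bR^3/L$, so $S:=T_1T_2$ fixes $L$ pointwise and descends to the identity on $\bR^3/L$. Consequently $N:=S-\id$ satisfies $\mathrm{im}(N)\subseteq L\subseteq\ker(N)$, whence $N^2=0$ and $S^k=\id+kN$ for all $k$.

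The final step is to rule out $N\neq 0$. Here $S$ preserves the centrally symmetric body $B$, and the group of linear automorphisms of $B$ --- being the isometry group of the norm $\|\cdot\|_B$ --- is a compact subgroup of $\mathrm{GL}_3(\bR)$; a compact group has no element with unbounded powers, so boundedness of $S^k=\id+kN$ forces $N=0$. Thus $S=\id$, giving $T_1=T_2$ and $H_1=H_2$, so $\Phi$ is injective and therefore surjective. The proposition then follows: given any line $L$, I pick $H$ with $\Phi(H)=L$ and invoke \Cref{le.aux_bis_dual}. I expect the injectivity step to be the main obstacle --- equivalently, excluding two distinct planes that induce the same center-line --- and the argument above localizes the difficulty in the compactness of $\mathrm{Aut}(B)$, which forbids the nontrivial shears $\id+kN$ that a failure of injectivity would create; continuity of $\Phi$ and the topological passage from injectivity to surjectivity are comparatively routine.
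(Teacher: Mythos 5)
Your proof is correct, but it takes a genuinely different route from the paper's at the decisive step. Both arguments make the same reduction: by \Cref{le.aux_bis_dual} it suffices to show that the map $\cG(3,2)\to\cG(3,1)$ of \Cref{re.32to31} is onto. The paper gets surjectivity by lifting this map to a continuous \emph{odd} map $\phi_B:\bS^2\to\bS^2$ and invoking the Borsuk--Ulam theorem (via \Cref{le.psi_onto}, already proved there for multivalued maps), with no injectivity claim anywhere. You instead prove surjectivity as injectivity $+$ invariance of domain $+$ compactness/connectedness of $\bR P^2$, and your injectivity argument is the real new content: if two distinct planes $H_1\ne H_2$ had the same center-line $L$, the product $S=T_1T_2$ of the two $B$-preserving linear involutions would be a nontrivial shear $\id+N$ with $N^2=0$, whose powers $\id+kN$ are unbounded, contradicting compactness of the linear automorphism group of the centrally symmetric body $B$. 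That argument is sound (the involutions are indeed linear since $0$ is the center of the central sections, each $T_i$ is $-\id$ on $\bR^3/L$ because $H_i\cap L=\{0\}$, and the automorphism group is closed and bounded, hence compact), and it yields a bonus the paper does not: the correspondence $H\mapsto L_H$ is actually a bijection $\cG(3,2)\to\cG(3,1)$, consistent with \Cref{le.inv}(2). What you trade away is economy: the paper reuses its Borsuk--Ulam machinery and needs only oddness and continuity, while you need continuity (which you, like the paper, leave as routine --- it does deserve the one-line reduction to continuity of centers of symmetry in the Hausdorff metric, say for sections at a fixed small offset $t_0v$), plus the rigidity argument, plus invariance of domain, a tool of comparable topological depth to Borsuk--Ulam, so neither route is more elementary than the other.
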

\begin{proof}
  \Cref{le.aux_bis_dual} above already proves the statement for those lines $L$ arising as images of planes $H\in \cG(3,2)$ through the map $\cG(3,2)\to \cG(3,1)$ from \Cref{re.32to31}. It is thus sufficient to show that this map is onto. In other words, we have to prove that {\it every} line $L$ contains the symmetry centers of the non-empty planar sections $(H+x)\cap B$ for some $H\in \cG(3,2)$.

Consider the map $\phi=\phi_B:\bS^2\to \bS^2$ defined as follows. For a unit vector $v\in \bS^2$, let $H=H_v$ be the plane orthogonal to $v$, and take $\phi(v)$ be the unit vector making an acute angle with $v$ and pointing along the line $L$ that contains the centers of symmetry of $(H+x)\cap B$, $x\in \bR^3$. 

Clearly, $\phi$ is continuous and odd, in the sense that $\phi(-v)=-\phi(v)$. We can now apply \Cref{le.psi_onto} in the simpler case of ordinary (rather than multivalued) maps to conclude via Borsuk-Ulam that $\phi$ is onto. 
\end{proof}

\Cref{pr.aux_bis,pr.aux_bis_dual} allow us to introduce the following

\begin{notation}\label{not.prime}
  For a linear subspace $H\subset \bR^3$ define $H'$ to be
  \begin{itemize}
    \item the zero subspace $\{0\}$ if $H=\bR^3$;
    \item the line containing the centers of symmetry of $(H+x)\cap B$ if $H$ is a plane;
    \item the plane containing the midpoints of $(H+y)\cap B$ if $H$ is a line;
    \item $\bR^3$ if $H=\{0\}$. 
  \end{itemize}
\end{notation}

The operation $H\mapsto H'$ on the subspace of $\bR^3$ has the following properties.

\begin{lemma}\label{le.inv}
  \begin{enumerate}
  \renewcommand{\labelenumi}{(\arabic{enumi})}  
    \item For any two subspaces $L,H$ of $\bR^3$ we have $L\subseteq H\Rightarrow L'\supseteq H'$; 
    \item For every subspace $H\subseteq \bR^3$ we have $H''=H$; 
    \item For every subspace $H\subseteq \bR^3$ we have $H\cap H'=\{0\}$. 
  \end{enumerate}
\end{lemma}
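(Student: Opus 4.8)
The plan is to deduce all three properties from the linear-involution reformulation of \Cref{pr.aux_bis} that was already exploited in the proof of \Cref{le.aux_bis_dual}: for a plane $H$, the linear involution $\tau_H$ of $\bR^3$ that acts as the identity on the line $H'$ and as $-\id$ on $H$ preserves $B$, and its eigenspaces are precisely $H$ (for $-1$) and $H'$ (for $+1$). I would first dispose of property (2). For a plane $H$, set $L=H'$; \Cref{le.aux_bis_dual} places every midpoint of $(L+y)\cap B$ inside $H$, so the plane $L'=H''$ that these midpoints span is contained in the plane $H$, hence equals it. For a line $L$ I would invoke the surjectivity of the map $\cG(3,2)\to\cG(3,1)$ proved in \Cref{pr.aux_bis_dual}: pick a plane $H$ with $H'=L$, so that $L'=H''=H$ by the plane case and therefore $L''=H'=L$. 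The two extreme subspaces $\{0\}$ and $\bR^3$ are interchanged by the defining \Cref{not.prime}, so (2) holds there too.

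Property (3) is then immediate from the eigenspace decomposition. For a plane $H$ the involution $\tau_H$ is diagonalizable with complementary eigenspaces $H$ and $H'$, so $\bR^3=H\oplus H'$ and in particular $H\cap H'=\{0\}$. For a line $L$ I would set $H=L'$; applying the plane case of (3) to $H$ and using $H'=L$ (which is (2)) gives $\{0\}=H\cap H'=L'\cap L$. The extreme cases are clear from \Cref{not.prime}.

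For property (1) the only nontrivial inclusion is that of a line $L$ in a plane $H$: any inclusion involving $\{0\}$ or $\bR^3$ is immediate from \Cref{not.prime}, and an inclusion between two subspaces of the same dimension is an equality. So suppose $L\subseteq H$; I must show the line $H'$ is contained in the plane $L'$. The idea is that every point $c$ of $H'$ arises, by the very definition of $H'$, as the center of symmetry of some section $(H+x)\cap B$. Since $L\subseteq H$, the line through $c$ in the direction of $L$ lies in the affine plane $H+x$; as $(H+x)\cap B$ is centrally symmetric about $c$ by \Cref{pr.aux_bis}, that line meets $B$ in a chord bisected by $c$. Hence $c$ is the midpoint of a chord of $B$ in the direction of $L$, so $c\in L'$; letting $c$ range over $H'$ yields $H'\subseteq L'$.

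The main obstacle is property (1). Unlike (2) and (3), which are essentially linear-algebraic consequences of the $B$-preserving involutions, (1) genuinely couples the two halves of the construction in \Cref{not.prime} --- centers of the planar sections cut by translates of $H$ against midpoints of the chords cut by translates of $L$ --- and the bridge between them is the elementary but essential fact that a chord of a centrally symmetric planar region through its center is bisected there. I would also take some care with degenerate sections $(H+x)\cap B$ that are points or segments, handling these by a closure argument since the centers $c$ sweep out the entire line $H'$ while the sections with nonempty relative interior are dense. The remaining bookkeeping (that the spanning midpoints in (2) genuinely fill a plane, making the containment $L'\subseteq H$ an equality) is routine given \Cref{pr.aux_bis,pr.aux_bis_dual}.
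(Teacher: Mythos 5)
Your treatments of (1) and (2) are essentially the paper's own proof. For (1), the paper argues exactly as you do: the center $c$ of a section $(H+x)\cap B$ bisects the chord of $B$ through $c$ in the direction of $L\subseteq H$, hence lies on $L'$. For (2), the paper likewise gets $H''=H$ for planes from \Cref{le.aux_bis_dual} and then handles lines via the surjectivity of the map $\cG(3,2)\to\cG(3,1)$ established in the proof of \Cref{pr.aux_bis_dual}. One small correction: it is not true that \emph{every} point of $H'$ is the center of some section $(H+x)\cap B$ --- the sections are non-empty only for a bounded range of translates, so the centers sweep out only a subset of $H'$ --- but since that subset contains $0$ and nonzero points and $L'$ is a linear subspace, the inclusion $H'\subseteq L'$ follows anyway; and no closure argument for degenerate sections is needed, since a section which is a point or segment is still centrally symmetric about its center, so the chord through the center in the direction of $L$ is still bisected there.

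The genuine problem is (3), where your argument is circular. You deduce $H\cap H'=\{0\}$ from the eigenspace decomposition of the linear involution $\tau_H$ fixing $H'$ and acting as $-\id$ on $H$. But a linear map with that prescription exists \emph{precisely when} $\bR^3=H\oplus H'$, which (by dimension count) is exactly the statement $H\cap H'=\{0\}$ that you are trying to prove: if $H'\subseteq H$, no map can act as both $+\id$ and $-\id$ on $H'$. The same remark shows that the ``restatement'' of \Cref{pr.aux_bis} as an involution, made in the proof of \Cref{le.aux_bis_dual}, is itself only legitimate once (3) is known for planes; so you cannot appeal to that lemma to supply the involution. The paper instead proves (3) directly in one line: since $0$ is interior to $B$, there are non-empty sections $(H+x)\cap B$ with $x\notin H$, and their centers are points of the line $H'$ lying in the affine plane $H+x$, hence outside $H$; therefore $H'\not\subseteq H$ and so $H\cap H'=\{0\}$ (the case of a line then follows from (2), as you do). The fix is trivial, but as written your (3) assumes its own conclusion.
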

\begin{proof}
  In all three statements, the interesting cases are those where the subspaces in question are 1- or 2-dimensional. For this reason, we treat only these cases in the proof. 

{\bf (1)} Here, we may as well assume that $L$ is a line contained in the plane $H$. $H'$ is by definition the line containing the symmetry centers of the sections $(H+x)\cap B$. Every such center is the midpoint of the segment $(L+x)\cap B$, and is thus on $L'$ by the definition of the latter.

{\bf (2)} The identity $H=H''$ for planes is simply a restatement of \Cref{le.aux_bis_dual} above. Applying the operation $\bullet\mapsto \bullet'$ once more we get $H'=H'''$ for planes $H$, and hence $L=L''$ for lines $L$ follows from the observation that every line arises as $H'$ for some plane $H$ (this latter claim is essentially the surjectivity of the map $\phi_B$ from the proof of \Cref{pr.aux_bis_dual}).

{\bf (3)} This is immediate: when $H$ is a plane, for instance, there are non-empty sections $(H+x)\cap B$ for non-zero $x\in \bR^3$, and hence there are points of the line $H'$ that are not contained in $H$. A similar argument applies when $H$ is a line (or alternatively, by part (2), in that case $H=H''$ and we can apply the previous argument to the plane $H'$).   
\end{proof}

Properties (1), (2) and (3) in \Cref{le.inv} are, according to \cite[Theorem 1]{KM44}, precisely what is necessary in order to ensure that there is a Hilbert space structure on $\bR^3$ for which $H\mapsto H'$ is the orthogonal complement operation. Such a Hilbert space structure is given by an inner product of the form 
\begin{equation*}
  \langle v,w\rangle = v\cdot Aw
\end{equation*}
where $A\in M_3=M_3(\bR)$ is a positive operator and $\cdot$ is the usual dot product. Since the image of an ellipsoid through a positive operator $A:\bR^3\to \bR^3$ is again an ellipsoid, we may as well assume (for the purposes of proving \Cref{th.ell_bis}) that $A$ is the identity matrix and hence $K\mapsto K'$ is the usual orthogonal complement operation in $\bR^3$; we will do this throughout the rest of the section, in order to simplify the discussion.

Now consider the usual transposition operation $T\mapsto T^t$ on $M_3$. It satisfies conditions (1), (2) and (3) of \Cref{th.istr} above. Our goal will be to apply that result to the Banach space $(\bR^3,\|\cdot\|_B)$ induced by the centrally symmetric convex body $B$ as explained in \Cref{subse.ban}. In preparation for that, note that the symmetric idempotents $P\in M_3$ (i.e. those satisfying $P^2=P=P^t$) are precisely those whose range and kernel are orthogonal.

\begin{lemma}\label{le.istr_applies}
  If $\|\cdot\|$ denotes the norm induced on $M_3$ by $\|\cdot\|_B$, then $\|P\|^2\le 1$ for all symmetric idempotents $P\in M_3$
\end{lemma}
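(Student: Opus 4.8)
The plan is to reinterpret the asserted inequality geometrically. Since $B$ is compact, it is the closed unit ball of $\|\cdot\|_B$, so $\|P\|\le 1$ (equivalently $\|P\|^2\le 1$) holds if and only if $P(B)\subseteq B$. Thus I would reduce the statement to showing that every symmetric idempotent carries $B$ into itself. After the normalization made above, the operation $H\mapsto H'$ is the usual orthogonal complement, so a symmetric idempotent $P\in M_3$ is exactly the orthogonal projection onto its range $V=\mathrm{im}\,P$, with kernel $W=\ker P=V^{\perp}=V'$. The cases $V=\{0\}$ and $V=\bR^3$ give $P=0$ and $P=\mathrm{id}$, with $\|P\|\in\{0,1\}$, so only $\dim V\in\{1,2\}$ require work.

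The core of the argument is to identify $Px$ with the symmetry center of a section of $B$. Fix $x\in B$. The fiber of $P$ through $x$ is the affine subspace $x+W$, a translate of $W=\ker P$, and $(W+x)\cap B$ is a non-empty (it contains $x$) convex set. By part (2) of \Cref{le.inv} we have $W'=(V')'=V$; hence, reading \Cref{not.prime} together with \Cref{pr.aux_bis} when $W$ is a plane (so that the parallel sections $(W+x)\cap B$ possess centers of symmetry) and with \Cref{le.aux_bis_dual} when $W$ is a line (so that they possess midpoints), the center $c$ of $(W+x)\cap B$ lies on the subspace $W'=V$. On the other hand $c$, being the symmetry center of a convex subset of $x+W$, lies in $x+W$; and since $V\oplus W=\bR^3$, the intersection $V\cap(x+W)$ is the single point $Px$. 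Therefore $c=Px$, and since the symmetry center (respectively midpoint) of a convex body is contained in that body, $Px=c\in(W+x)\cap B\subseteq B$.

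Letting $x$ range over $B$ yields $P(B)\subseteq B$, hence $\|P\|\le 1$, which is exactly the hypothesis demanded by the last condition of \Cref{th.istr}. The only delicate point, and the sole place where geometry genuinely enters, is the equality $c=Px$: it couples the kernel/range splitting of $P$ with the orthocomplementation relation $W'=V$ furnished by \Cref{le.inv}, and it rests on the earlier and substantially harder \Cref{pr.aux_bis,pr.aux_bis_dual,le.aux_bis_dual}, which are what actually produce these symmetry centers. Granting those, there is no remaining obstacle; what is left is only the bookkeeping of distinguishing one- from two-dimensional affine sections.
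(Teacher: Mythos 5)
Your proof is correct and takes essentially the same route as the paper: the paper's phrase ``$B$ is contained in the orthogonal cylinder based on $H\cap B$'' is exactly your observation that, because sections of $B$ parallel to $\ker P$ have their symmetry centers (or midpoints) on $(\ker P)'=\mathrm{im}\,P$, the point $Px$ is that center and hence lies in $B$. Your write-up merely makes explicit the identification $c=Px$ and handles the rank-one and rank-two cases uniformly, where the paper does rank two and declares rank one analogous.
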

\begin{proof}
  If the range of $P$ is three- or zero-dimensional then $P$ is the identity or the zero operator respecticely, so there is nothing to prove. It thus remains to prove the claim when $\dim(\mathrm{Im}(P))$ is $2$ or $1$. We tackle the former case; the latter is entirely analogous. 

If $H\in \cG(3,2)$ is the range of $P$, then $P$ is the orthogonal projection on $H$. Since, as explained in the discussion following \Cref{le.inv}, we are assuming that $H'=H^\perp$, the convex body $B$ (i.e. the unit ball of $(\bR^3,\|\cdot\|_B)$) is contained in the orthogonal cylinder based on $H\cap B$. But this means that if $\|x\|_B=1$ (i.e. $x\in \partial B$) then the orthogonal projection $Px$ on $H$ is contained in $B\cap H$, and hence $\|Px\|_B\le 1$. In conclusion, 
\begin{equation*}
  \|P\| = \sup_{\|x\|_B\le 1}\|Px\|_B\le 1. 
\end{equation*}
This finishes the proof. 
\end{proof}

This concludes the preparatory material necessary for the proof of the main result of this section.

\begin{proof_of_ellbis}
  As mentioned after the statement of the theorem, it suffices to prove that (iii) implies that $B$ is an ellipsoid. Moreover, \Cref{pr.n=3} shows that it is enough to work with $n=3$.

\Cref{cor.iii_implies_sym} ensues that $B$ is centrally symmetric, and hence can be regarded as the unit ball of a Banach space $(\bR^3,\|\cdot\|_B)$. \Cref{le.istr_applies,th.istr} now ensure that the norm $\|\cdot\|_B$ is induced by an inner product on $\bR^3$, and hence the unit ball of $\|\cdot\|_B$ is an ellipsoid, as explained in \Cref{subse.ban}. 
\end{proof_of_ellbis}

\section{Illuminated bodies and subspace lattice involutions}\label{se.illum}

The techniques employed in \Cref{se.park} will also help in improving on a characterization of ellipsoids via illumination by parallel rays due to Blaschke (\cite[pp. 157-159]{bla}). For the purpose of stating the result briefly, we introduce one more piece of terminology.

\begin{definition}\label{def.wbla}
  A convex body $B$ in $\bR^3$ has {\it the weak Blaschke property} (or {\it is weakly Blaschke}) if for any line $L$ there exists an affine plane $H\subset \bR^3$ such that $H$ intersects the interior of $B$ and all points in $H\cap \partial B$ lie on some $B$-supporting translate of $L$. 

$B$ has {\it the weak dual Blaschke property} (or {\it is weakly dual Blaschke}) if for any linear plane $H\subset \bR^3$ there is a translate $H'$ of $H$ and a line $L\in \bR^3$ such that $H'$ intersects the interior of $B$ and all points in $H'\cap \partial B$ lie on some $B$-supporting translate of $L$.  
\end{definition}

\begin{remark}\label{re.wbla}
  Note the difference to \Cref{def.bla}: in \Cref{def.wbla} we do not require the planes to pass through a given, fixed point (such as the origin in the case of the former definition). This justifies the adjective `weak' in the definition. In fact, it is easy to see that centrally symmetric weakly (dual) Blaschke convex bodies are automatically (dual) Blaschke.  
\end{remark}

\begin{remark}
The weak Blaschke property can be interpreted as saying that if the body is illuminated with parallel rays, then the boundary of the shaded region contains a coplanar curve; hence the title of this section. 
\end{remark}

With this language in place, Blaschke's result referred to above states that a smooth, strictly convex convex body in $\bR^3$ that is weakly Blaschke must be an ellipsoid. In this section we remove the smoothness and strict convexity requirements:

\begin{theorem}\label{th.illum}
  A convex $B\subset \bR^3$ that is weakly Blaschke is an ellipsoid. 
\end{theorem}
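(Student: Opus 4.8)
The plan is to reduce the weak Blaschke hypothesis to condition (ii) of \Cref{th.ell_bis}, after which $B$ is an ellipsoid by the already-established chain (ii) $\Rightarrow$ (iii) $\Rightarrow$ (i) coming from \Cref{pr.ell,th.ell_bis}. Concretely, I would establish two things: (a) every planar section of $B$ is centrally symmetric, with the centres of any parallel family of sections collinear, and (b) that $B$ is itself centrally symmetric. Together these are precisely condition (ii), and no further functional-analytic machinery is then needed.

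For (a) I would re-run the proof of \Cref{pr.aux_bis}, substituting the \emph{affine} terminator planes furnished by \Cref{def.wbla} for the linear ones used there. Fix a plane normal $m$ for which the two supporting planes of $B$ orthogonal to $m$ meet $B$ in single points $p_m$ and $q_m$; such $m$ are dense and the desired conclusion is closed in $m$, so it suffices to treat these. For any line direction $L$ orthogonal to $m$, the lines through $p_m$ and $q_m$ in direction $L$ lie inside those supporting planes and hence support $B$, so $p_m,q_m$ lie on the shadow boundary for direction $L$. Since that shadow boundary is a simple closed curve and the weak Blaschke plane $H=H(L)$ meets $\partial B$ in a simple closed curve contained in it, the two curves coincide, giving $p_m,q_m\in H$. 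For a section $K=\{m\cdot x=c\}\cap B$ with the two supporting lines $L_1,L_2$ parallel to $L$, the chord $H\cap K$ then has its endpoints on $L_1$ and $L_2$ and passes through $r=\{m\cdot x=c\}\cap\overline{p_mq_m}$, so $r\in\co((L_1\cap K)\cup(L_2\cap K))$. Letting $L$ range over all in-plane directions, \Cref{le.supports} (applied inside $\{m\cdot x=c\}$ with $r$ as origin) shows $K$ is centrally symmetric about $r$; the centres therefore lie on the affine diameter $D_m:=\overline{p_mq_m}$.

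Step (b) pins down the common centre. Each slice $\{m\cdot x=c\}\cap B$ is symmetric about a point $c_m(c)\in D_m$, so its centroid equals $c_m(c)$; integrating over $c$, the centroid $o$ of $B$ is a volume-weighted average of the $c_m(c)$ and hence lies on the line $D_m$, for every $m$. Translating so that $o=0$, the central slice $\{m\cdot x=0\}\cap B$ has its centre on $D_m\cap\{m\cdot x=0\}=\{0\}$ (the diameter $D_m$ is transverse to this plane, and both contain $0$), so it is symmetric about $0$. As $m$ varies this covers every plane through the origin, whence $B=-B$. Thus $B$ is centrally symmetric and all of its hyperplane sections have centres, which is exactly condition (ii), and $B$ is an ellipsoid. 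I expect the main obstacle to be step (a), and within it the verification that the affine terminator plane genuinely contains the contact points $p_m,q_m$ — equivalently, that the planar shadow boundary is the full section $H\cap\partial B$ rather than a proper subcurve of it. This is the single point at which the weakening of the Blaschke hypothesis (affine rather than linear terminator planes) must be handled with care, and it mirrors the analogous step in the proof of \Cref{pr.aux_bis}.
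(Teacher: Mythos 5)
Your overall plan---derive condition (ii) of \Cref{th.ell_bis} directly from the weak Blaschke hypothesis and then quote that theorem---is viable, but the step you yourself single out as the crux contains a genuine error as written. You justify $p_m,q_m\in H(L)$ by asserting that the shadow boundary of $B$ in the direction $L$ is a simple closed curve, so that the simple closed curve $H(L)\cap\partial B$ contained in it must exhaust it. For a general convex body this is false: the shadow boundary is the union of the fibres $B\cap\pi_L^{-1}(z)$, $z\in\partial(B|L^\perp)$, where $\pi_L$ denotes projection along $L$ and $B|L^\perp$ the shadow, and these fibres can be nondegenerate segments (for a cube with $L$ parallel to an edge, the shadow boundary consists of four entire facets, a two-dimensional set). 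Since \Cref{th.illum} is precisely the removal of Blaschke's smoothness and strict-convexity hypotheses, the bodies on which your argument breaks down are the ones the theorem is meant to add. The containment you need is nevertheless true, and the repair is the projection argument implicit in the paper's proof of \Cref{pr.aux_bis}: the direction of $L$ cannot be parallel to $H(L)$ (otherwise the connected curve $H(L)\cap\partial B$ would lie on one of the two supporting lines of $H(L)\cap B$ parallel to $L$, which is absurd), so $\pi_L$ maps $H(L)\cap\partial B$ homeomorphically onto a simple closed curve inside the simple closed curve $\partial(B|L^\perp)$, forcing $\pi_L\bigl(H(L)\cap\partial B\bigr)=\partial(B|L^\perp)$. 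The line through $p_m$ parallel to $L$ lies in the supporting plane of $B$ at $p_m$, hence misses the interior of $B$, so $\pi_L(p_m)\in\partial(B|L^\perp)$ and some $y\in H(L)\cap\partial B$ lies on that line; but that line meets $B$ only inside the supporting plane at $p_m$, which by your genericity assumption meets $B$ in $\{p_m\}$ alone, whence $y=p_m\in H(L)$. (One could instead invoke the fact that the directions of segments in $\partial B$ form a null set, so that your simple-closed-curve claim holds for almost every $L$, but that is a far heavier tool than the elementary fix above.)

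With that repair your proof is correct, and it takes a genuinely leaner route than the paper's. The paper first proves that a weakly Blaschke body is weakly dual Blaschke (\Cref{le.self-dual}, via a multivalued Borsuk--Ulam theorem), obtains a center of symmetry by producing infinitely many affine symmetries (\Cref{le.wbla_symm}), and then re-runs the entire functional-analytic proof of \Cref{th.ell_bis} with the Blaschke and dual Blaschke properties standing in for condition (iii). Your version needs none of the Borsuk--Ulam machinery: your step (a) is the geometric core of \Cref{le.wbla_symm} (same genericity, same use of \Cref{le.supports}), your centroid argument in step (b) replaces the paper's appeal to the infinitude of the affine symmetry group and pins down the center more explicitly, and reducing to condition (ii) lets you cite \Cref{th.ell_bis} as a black box instead of reproving it. One small point to make explicit: at the end of step (b), ``as $m$ varies this covers every plane through the origin'' requires the same density-plus-closedness remark you used in step (a), since step (a) applies only to generic $m$; this is harmless because $0$ is interior to $B$, so central sections vary continuously in the Hausdorff metric and symmetry about $0$ passes to limits.
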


We once more prepare the ground before giving the proof proper. First, we show that the weak Blaschke property entails its dual.

\begin{lemma}\label{le.self-dual}
  A weakly Blaschke convex body is weakly dual Blaschke. 
\end{lemma}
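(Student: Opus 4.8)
The plan is to reprise the argument by which the dual Blaschke property was converted into the Blaschke property in \Cref{pr.bla}: I would encode the weak Blaschke hypothesis as an odd, upper semicontinuous, convex-valued multifunction from line directions to plane directions, and then invoke the multivalued Borsuk--Ulam theorem in the form of \Cref{le.psi_onto} to see that this multifunction is onto, which is precisely the assertion of weak dual Blaschkeness.

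Concretely, for a unit vector $w\in\bS^2$, viewed as the direction of a line, let $\Gamma_w\subseteq\partial B$ be the \emph{silhouette} consisting of those $p$ for which the line $p+\bR w$ is $B$-supporting, and set
\[
  \Phi(w)=\left\{n\in\bS^2\ :\ n\cdot w>0,\ \exists\ \text{a plane }H\text{ of normal }n\text{ meeting }\mathrm{int}(B)\text{ with }H\cap\partial B\subseteq\Gamma_w\right\}.
\]
The weak Blaschke property (\Cref{def.wbla}) says exactly that each $\Phi(w)$ is non-empty, while ``$\Phi$ is onto'' in the sense of \Cref{le.psi_onto}---every $n$ lies in some $\Phi(w)$---unwinds to the statement that every linear plane direction $n$ admits a translate meeting the interior of $B$ whose boundary section consists of $w$-silhouette points for a suitable line direction $w$; this is the weak dual Blaschke property. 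I would first record the sign normalization: a plane parallel to $w$ meeting $\mathrm{int}(B)$ cannot satisfy $H\cap\partial B\subseteq\Gamma_w$, since within such a plane a line in direction $w$ supports the planar body $H\cap B$ only at its two extreme points and enters the interior elsewhere. Hence every admissible plane is transverse to $w$, its unoriented normal can be oriented by the condition $n\cdot w>0$, and from $\Gamma_{-w}=\Gamma_w$ one gets $\Phi(-w)=-\Phi(w)$, i.e. $\Phi$ is odd exactly as in \Cref{le.odd}. Upper semicontinuity follows, as for the maps in \Cref{le.odd,le.cont}, from the closedness of the family of $B$-supporting affine lines together with a compactness argument extracting a limiting admissible plane; these verifications I expect to be routine.

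The main obstacle is the convexity of the values $\Phi(w)$ demanded by \Cref{le.psi_onto}. Unlike the convexity of the multifunction in \Cref{pr.bla}, which was an intersection of per-point convex cones of supporting directions, here one must control the set of \emph{plane normals} of admissible sections, and I expect this to be the delicate point. My plan is to exploit the normal-cone description $\Gamma_w=\{p\in\partial B: N(p)\cap w^\perp\ne\{0\}\}$ to show that these unoriented normals form a closed spherical arc: for a generic line direction $w$ the silhouette $\Gamma_w$ is a single closed curve, so $H\cap\partial B\subseteq\Gamma_w$ forces $H\cap\partial B=\Gamma_w$ and hence a \emph{unique} admissible plane, whence $\Phi(w)$ is a singleton; the remaining directions are then handled by a density reduction together with the closed-graph property, in the spirit of the reduction to generic planes used in \Cref{pr.aux_bis}. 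With convexity in hand, \Cref{le.psi_onto} applies verbatim and yields the lemma.
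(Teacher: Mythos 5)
Your skeleton is exactly the paper's: the paper proves \Cref{le.self-dual} by defining precisely your multifunction (a line direction $w$ goes to the set of oriented normals of planes witnessing the weak Blaschke property for $w$), checking oddness and upper semicontinuity as in \Cref{le.odd}, and invoking \Cref{le.psi_onto}. You are also right to single out convexity of the values as the one hypothesis of \Cref{le.psi_onto} that is not formal (the paper asserts it silently, by declaring the codomain to consist of closed \emph{convex} subsets). But your plan for securing convexity is where the proposal breaks. \Cref{le.psi_onto} is a global statement about multimaps defined on all of $\bS^2$; knowing that $\Phi$ is singleton-valued on a dense set $G$ and has closed graph gives an odd, upper semicontinuous, compact-valued extension, but its values at points of $\bS^2\setminus G$ are sets of limit points and need not be spherically convex --- and without convexity the conclusion is simply false. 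Concretely: fix antipodal poles $\pm p$, let $\Phi(w)$ be the radial projection of $w$ onto the equatorial circle $p^\perp\cap\bS^2$ for $w\ne\pm p$, and let $\Phi(\pm p)$ be the whole equator. This $\Phi$ is odd, has closed graph, has non-empty compact values, and is singleton-valued off two points, yet its image is only the equator; the non-convex values at the two exceptional points are exactly what blocks Borsuk--Ulam. Your situation is potentially of this type. Note also that your exceptional set (directions parallel to some segment of $\partial B$) can be \emph{dense} in $\bS^2$ for a general convex body (countably many flat faces in a dense set of directions), so it cannot be dismissed by openness considerations, and its measure-zero character --- itself a nontrivial Ewald--Larman--Rogers-type fact you would need to cite --- does not help: an odd continuous map defined off a null set need not be onto, as the same equator example shows.

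The good news is that convexity of $\Phi(w)$ holds at \emph{every} $w$ and can be proved directly, which restores the argument in the form the paper uses it. Parametrize planes transverse to $w$ as graphs over $w^\perp$ of affine functions $h(q)=a\cdot q+b$ with $a\in w^\perp$; the oriented normal is $(w-a)/\|w-a\|$, and a normalized spherical convex combination of $(w-a_1)/\|w-a_1\|$ and $(w-a_2)/\|w-a_2\|$ is again of the form $(w-a)/\|w-a\|$ with $a$ a convex combination of $a_1,a_2$ (with reweighted coefficients); choosing $b$ to be the corresponding combination of $b_1,b_2$ makes $h$ a pointwise convex combination of $h_1,h_2$. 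Admissibility of a plane is a fiberwise condition along lines in direction $w$: over each point of $\partial(B|w^\perp)$ the graph must land in the closed (possibly degenerate) segment that $\Gamma_w$ cuts out of that fiber, and over each interior point of the shadow it must land in the open segment that $\mathrm{int}(B)$ cuts out of the fiber (the latter holds for $h_1,h_2$ by a connectedness argument, since their sections avoid $\partial B$ over the interior of the shadow and do meet $\mathrm{int}(B)$). Both are interval conditions, hence preserved under pointwise convex combination. Thus $\Phi(w)$ is closed and spherically convex for all $w$, and \Cref{le.psi_onto} applies exactly as in the paper's proof.
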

\begin{proof}
  This is very similar in spirit to the proof of \Cref{pr.bla} from \Cref{pr.co_bla} via \Cref{le.psi_onto}, using the same multivalued Borsuk-Ulam theorem. 

To any unit vector $v\in \bS^2$ associate the subset of the unit $2$-sphere consisting of those vectors $w$ with the property that $w\cdot v>0$ and $w$ is orthogonal to those planes associated as in the definition of the weak Blaschke property to the line containing $v$. 

This gives rise, as in the discussion following \Cref{pr.bla}, to a map 
\begin{equation*}
       \psi=:\bS^2\to\text{ closed convex subsets of }\bS^2.
\end{equation*}
$\psi$ can be shown to be upper semicontinuous and odd as in \Cref{le.odd}, and hence is onto according to \Cref{le.psi_onto}. This concludes the proof: we have just shown that {\it every} plane in $\bR^3$ is parallel to some affine plane associated to a line as in the definition of the weak Blaschke property. 
\end{proof}

Next, we prove the existence of a center of symmetry.

\begin{lemma}\label{le.wbla_symm}
  A weakly Blaschke convex body $B\subset \bR^3$ has a center of symmetry. 
\end{lemma}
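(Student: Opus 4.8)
The plan is to produce a single point $c\in\bR^3$ lying on every \emph{affine diameter} of $B$ — by which I mean, for a direction $v\in\bS^2$, the segment $D_v=\co(F_{+v}\cup F_{-v})$ joining the two faces $F_{\pm v}=B\cap Q_{\pm v}$ at which the supporting planes $Q_{\pm v}$ of $B$ with outer normals $\pm v$ touch $B$. Once such a $c$ is found the rest is automatic: projecting orthogonally along an arbitrary $u\in\bS^2$ onto $u^\perp$ sends $c$ into the convex hull of the two contact sets $\pi_u(F_{\pm v})$ of the supporting lines of the projection $B|u^\perp$ orthogonal to each $v\perp u$, which is precisely the hypothesis of \Cref{le.supports}; that lemma then makes every planar projection of $B$ centrally symmetric about $\pi_u(c)$, and \Cref{cor.centr} upgrades this to central symmetry of $B$ itself about $c$. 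So the entire problem reduces to constructing the common point $c$.

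The construction is where the weak Blaschke hypothesis enters. Fix two directions $v_1,v_2\in\bS^2$ and set $w=v_1\times v_2$, so that $w\perp v_1$ and $w\perp v_2$. A point $p$ in the relative interior of a face $F_{\pm v_i}$ has supporting plane $Q_{\pm v_i}$ with normal $v_i\perp w$, so the line $p+\bR w$ lies inside $Q_{\pm v_i}$ and meets $B$ only along $F_{\pm v_i}\subset\partial B$; hence it is $B$-supporting, and all four faces lie on the shadow boundary of $B$ in the direction $w$. Invoking the weak Blaschke property for the line $\bR w$ produces a plane $H_w$ meeting the interior of $B$ whose section $H_w\cap\partial B$ is planar and consists of shadow-boundary points; comparing the two closed boundary curves identifies $H_w\cap\partial B$ with the whole shadow boundary, so all four faces lie in the single plane $H_w$. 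Consequently both $D_{v_1}$ and $D_{v_2}$ lie in $H_w$, and since $Q_{+v_i}\cap H_w$ and $Q_{-v_i}\cap H_w$ are parallel supporting lines of the planar section $S_w=H_w\cap B$ at the two ends of $D_{v_i}$, each $D_{v_i}$ is in fact an affine diameter of $S_w$. Because any two affine diameters of a planar convex body must cross — the outer normal turns monotonically by $\pi$ along each boundary arc, so two points on a common arc cannot carry parallel supporting lines — we conclude $D_{v_1}\cap D_{v_2}\neq\emptyset$.

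Thus $\{D_v\}_{v\in\bS^2}$ is a family of pairwise intersecting lines. Since the lines $D_v$ point in every direction, the family is not contained in a single plane, and a pairwise-intersecting non-coplanar family of lines is necessarily concurrent; this yields the common point $c$, and the reduction of the first paragraph completes the argument.

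The main obstacle is the passage from the smooth, strictly convex picture — where each $F_{\pm v}$ is a single point, the shadow boundary is a Jordan curve, and ``$H_w\cap\partial B$ lies in the shadow boundary'' forces equality — to a \emph{general} convex body, which is exactly what distinguishes this statement from Blaschke's original result. For general $B$ the shadow boundary may be a band rather than a curve, the faces $F_{\pm v}$ may be higher-dimensional, and the identification of $H_w\cap\partial B$ with the shadow boundary can fail, so the coplanarity step has to be run with the upper-semicontinuous, multivalued machinery already deployed in \Cref{le.self-dual} and \Cref{le.psi_onto} — tracking $\co(F_{+v}\cup F_{-v})$ as a closed convex set depending upper-semicontinuously on $v$ — rather than through individual boundary points. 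The two supporting facts used above, that two affine diameters of a planar body always meet and that pairwise-intersecting lines in $\bR^3$ are concurrent unless coplanar, are elementary but should be recorded explicitly.
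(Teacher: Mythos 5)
Your first paragraph is sound: if a single point $c$ lies in every set $D_v=\co(F_{+v}\cup F_{-v})$, then projecting along an arbitrary $u$ and invoking \Cref{le.supports} and \Cref{cor.centr} does produce a center of symmetry, and your planar fact (two full contact chords of a planar convex body always meet) is true, by monotonicity of the Gauss map and interlacing of the four contact arcs. The fatal problem is the coplanarity step, which is exactly where your argument silently re-imports Blaschke's smoothness and strict-convexity hypotheses --- the hypotheses that \Cref{th.illum} is meant to remove. The weak Blaschke property only gives a plane $H_w$ with $H_w\cap\partial B$ \emph{contained in} the shadow boundary in direction $w$; your conclusion that the four faces $F_{\pm v_i}$ lie in $H_w$ needs the reverse containment, which you get by ``comparing the two closed boundary curves'' --- valid only when the shadow boundary is a curve, i.e.\ essentially in the smooth strictly convex case. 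For a general body it fails outright. Take $B=\{x^2+y^2\le 1,\ |z|\le 1\}$ and $w=(1,0,0)$: the shadow boundary is the union of the two horizontal unit disks and the two vertical segments $\{(0,\pm 1,z)\}$, and the plane $\{x=0\}$ meets the interior of $B$ and has boundary section (a square) lying entirely in that shadow boundary, so it is a legitimate weak-Blaschke plane for this $w$; yet it contains neither of the faces $F_{\pm(0,0,1)}$ (the two disks). So the weak Blaschke property genuinely does not force the faces into $H_w$. This is not a continuity defect, so deferring it to the ``upper-semicontinuous multivalued machinery'' of \Cref{le.self-dual} and \Cref{le.psi_onto} does not repair it: that machinery proves surjectivity-type statements via Borsuk--Ulam, not the pointwise coplanarity you need. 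Since the passage beyond the smooth strictly convex case is the entire content of the theorem, this acknowledged-but-unfilled gap is where the proof actually lives.

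There is a second gap at the concurrence step: for a general body the sets $D_v$ are not lines, nor even segments --- they can be two- or three-dimensional convex sets (for the cylinder above, $D_{(0,0,1)}$ is all of $B$). Pairwise intersection of convex sets in $\bR^3$ does not yield a common point (Helly requires every four to meet), so even granting pairwise intersection your point $c$ does not follow. It is instructive to compare with the paper's route, which avoids both difficulties by never hunting for a single center directly: one fixes a generic plane $H$ whose two supporting translates touch $B$ in single points $p,q$, shows via the mechanism of \Cref{pr.aux_bis} (the weak Blaschke plane associated to a line $L\subset H$ must pass through $p$ and $q$, which feeds \Cref{le.supports} inside each section $(H+x)\cap B$) that every $H$-parallel section is symmetric about its intersection with the line $pq$, and thereby obtains an affine involution of $B$ for each such $H$; infinitely many such involutions force a center of symmetry. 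If you want to rescue your approach, you would in effect have to prove that intermediate statement first --- which is what the paper does.
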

\begin{proof}
  It suffices to show that the group of affine symmetries of $B$ is infinite. For this purpose, let $H\subset \bR^3$ be a plane  with the property that the two $B$-supporting translates of $H$ intersect $B$ at single point $p$ and $q$.  

We now proceed very much as in the proof of \Cref{pr.aux_bis}. We denote by $K$ a planar section $(H+x)\cap B$ of $B$ that has non-empty relative interior. Then choose a line $L\subset H$. Just as in the cited proof, we can show using the weak Blaschke property that if $L_1$ and $L_2$ are the $K$-supporting translates of $L$, then the convex hull of $L_i\cap K$ contains the point $\overline{pq}\cap K$.

\Cref{le.supports} then ensures that $K$ has a center of symmetry at $\overline{pq}\cap K$. Since the $H$-parallel section $K$ of $B$ was arbitrary, we conclude that the affine involution of $\bR^3$ that preserves the points on the line $pq$ and acts as reflection across $pq\cap (H+x)$ along the plane $H+x$ preserves $B$. 
 
The above construction provides us with infinitely many affine symmetries of $B$, one for each choice of plane $H$ with the generic property specified at the beginning. 
\end{proof}

\begin{proof_of_illum}
  We now know from \Cref{le.self-dual} that $B$ is both weakly Blaschke and weakly dual Blaschke. Since moreover it has a center of symmetry by \Cref{le.wbla_symm}, we can assume that $B$ is centered at $0$ and hence is both Blaschke and dual Blaschke (cf. \Cref{re.wbla}). Running through the proof of \Cref{th.ell_bis}, this is sufficient to conclude that $B$ is an ellipsoid. 
\end{proof_of_illum}


\bibliography{park}{}
\bibliographystyle{plain}
\addcontentsline{toc}{section}{References}

\end{document}